\newlength{\leftstackrelawd}
\newlength{\leftstackrelbwd}
\def\leftstackrel#1#2{\settowidth{\leftstackrelawd}%
    {${{}^{#1}}$}\settowidth{\leftstackrelbwd}{$#2$}%
    \addtolength{\leftstackrelawd}{-\leftstackrelbwd}%
    \leavevmode\ifthenelse{\lengthtest{\leftstackrelawd>0pt}}%
    {\kern-.5\leftstackrelawd}{}\mathrel{\mathop{#2}\limits^{#1}}}
\numberwithin{equation}{section}
\theoremstyle{plain}
\newtheorem{maintheorem}{Theorem}
\newtheorem{mainassumption}{Assumption}
\newtheorem{maincorollary}[maintheorem]{Corollary}
\newtheorem{theorem}{Theorem}[section]
\newtheorem{lemma}[theorem]{Lemma}
\newtheorem{proposition}[theorem]{Proposition}
\newtheorem{definition}[theorem]{Definition}
\newtheorem{assumption}[theorem]{Assumption}
\newtheorem{remark}[theorem]{Remark}
\newcommand{\bR}{\mathbb{R}}
\newcommand{\bN}{\mathbb{N}}
\newcommand{\bZ}{\mathbb{Z}}
\newcommand{\bE}{\mathbb{E}}
\newcommand{\bP}{\mathbb{P}}
\newcommand{\crits}{\mathrm{Crit}_s(f)}
\newcommand{\crit}{\mathrm{Crit}(f)}
\newcommand{\norm}[1]{\left\Vert #1\right\Vert}
\newcommand{\trans}{^\top}
\title[RCGD almost surely escapes strict saddle points]
{Randomized coordinate gradient descent almost surely escapes strict
saddle points}
\author{Ziang Chen}
\address{(ZC) Department of Mathematics, Massachusetts Institute of
Technology, Cambridge, MA 02139, USA}
\email{ziang@mit.edu}
\author{Yingzhou Li}
\address{(YL) School of Mathematical Sciences, Fudan University, Shanghai
200433, China}
\email{yingzhouli@fudan.edu.cn}
\author{Zihao Li}
\address{(ZL) School of Mathematical Sciences, Fudan University, Shanghai
200433, China}
\email{zhli24@m.fudan.edu.cn}
\date{\today}
\thanks{The work of ZC is supported in part by the National Science
Foundation via grant DMS-2509011. The work of YL and ZL is supported by
the National Natural Science Foundation of China (NSFC) under grant
numbers 12271109, the Science and Technology Commission of Shanghai
Municipality (STCSM) under grant numbers 22TQ017 and 24DP2600100, and the
Shanghai Institute for Mathematics and Interdisciplinary Sciences (SIMIS)
under grant number SIMIS-ID-2024-(CN). We thank Jianfeng Lu for helpful
discussions.}
\begin{document}

\begin{abstract}
We analyze the behavior of randomized coordinate gradient descent for nonconvex optimization, proving that under standard assumptions, the iterates almost surely escape strict saddle points. By formulating the method as a nonlinear random dynamical system and characterizing neighborhoods of critical points, we establish this result through the center-stable manifold theorem.

\end{abstract}

\maketitle

\section{Introduction}

Randomized coordinate gradient descent is a widely used optimization
method in scientific computing, particularly for large-scale problems. In
this paper, we analyze the escape behavior of randomized coordinate
gradient descent from strict saddle points when applied to smooth,
nonconvex optimization problems of the form
\begin{equation}\label{obj-fun}
\min_{x \in \mathbb{R}^d} f(x).
\end{equation}
Specifically, we study coordinate gradient descent with a randomized
coordinate selection rule and a fixed step size, as detailed in
Algorithm~\ref{alg:RCGD}.

\begin{algorithm}[H]
    \caption{Randomized coordinate gradient descent}
    \label{alg:RCGD}
    \begin{algorithmic}
        \State Initialization: $x_0\in\bR^d$, $t=0$.
        \While {not convergent}
        \State Sample a coordinate $i_t$ uniformly random from $\{1,2,\dots,d\}$.
        \State $x_{t+1}\leftarrow x_t- \alpha e_{i_t} \partial_{i_t} f(x_t)$.
        \State $t\leftarrow t+1$.
        \EndWhile
    \end{algorithmic}
\end{algorithm}

The main contribution of this paper, presented in Theorem~\ref{thm:main},
establishes that under standard assumptions---namely, the smoothness of
the objective function \( f \), the boundedness of its Hessian, and the
non-degeneracy of its strict saddle points---the set of all pairs \((x_0,
\omega)\) converging to a strict saddle point has measure zero. Here, \(
x_0 \) denotes the initial point, and \( \omega \) represents the sequence
of sampled coordinates. Furthermore, under the additional assumption that
all critical points are isolated, the algorithm is guaranteed to converge
globally to a critical point without negative Hessian eigenvalues. The
proof is based on a random dynamical systems perspective, following
\cite{chen2023global}, and the application of the center-stable manifold
theorem that rigorously characterizes the local behavior of the algorithm
near saddle points.

\subsection{Related works}

Coordinate gradient descent (CGD) is a widely used optimization technique,
particularly well-suited for modern large-scale
problems~\cites{wright2015coordinate, shi2016primer}. The method's
popularity stems from its computational efficiency and scalability, making
it applicable across diverse domains~\cite{zhang2025parallel,
wang2019coordinate, wang2023coordinate}. Notable applications include
symmetric eigenvalue problems~\cite{bai2025greedy}, root-finding
algorithms~\cite{tran2025randomized}, quantum circuit
optimization~\cite{ding2024random}, and high-dimensional statistics, where
it is implemented in packages such as
SparseNet~\cite{mazumder2011sparsenet}.

Coordinate gradient descent (CGD) methods fall into two main categories: deterministic approaches that follow fixed coordinate selection rules, and randomized methods that employ stochastic sampling. Among deterministic variants, the cyclic strategy updates coordinates in a fixed periodic order, while the greedy approach (Gauss--Southwell rule) selects at each iteration the coordinate offering the steepest descent. For strongly convex objectives, cyclic CGD's convergence properties are well-established~\cite{wright2015coordinate}, while the greedy variant's behavior has been characterized in~\cite{nutini2015coordinate}. These guarantees extend to general convex functions under cyclic strategies~\cite{saha2013nonasymptotic, beck2013convergence}. In nonconvex optimization, significant attention has focused on saddle point avoidance. Work in~\cites{lee2016gradient, lee2019first} demonstrates that deterministic first-order methods, including cyclic CGD, almost surely escape strict saddle points. The Kurdyka--Łojasiewicz (KŁ) framework~\cite{attouch2010proximal, boct2023inertial} further provides convergence guarantees for coordinate methods in nonconvex and nonsmooth settings. Extensions to variance-reduced or manifold-constrained methods are studied in~\cite{cai2023cyclic, peng2023block}.

Randomized coordinate gradient descent (RCGD) employs various sampling strategies, including uniform/non-uniform random selection and random-permutation approaches~\cites{lee2019random, wright2020analyzing}. For convex optimization, \cite{nesterov2012efficiency} establishes sublinear convergence to the minimum in expectation ($\mathbb{E}(f(x_t))$) for general convex functions, with linear convergence under strong convexity. Further convex convergence results appear in~\cites{liu2014asynchronous, liu2015asynchronous, wright2015coordinate}. The nonconvex case presents distinct challenges: unlike randomized gradient descent that escapes strict saddle points through additive Gaussian noise~\cites{ge2015escaping, jin2017escape, jin2018accelerated}, RCGD's coordinate-wise stochasticity inherently limits this capability. While introducing additive perturbations might help, such modifications risk compromising the algorithm's coordinate structure, necessitating alternative analytical approaches. Relevant nonconvex convergence analyses for RCGD can be found in~\cites{chen2023global, li2019coordinatewise}.

A fruitful research direction interprets iterative algorithms as discrete-time dynamical systems~\cites{lee2016gradient,lee2019first, o2019behavior}. In deterministic settings, the center-stable manifold theorem~\cite{shub2013global} provides a powerful tool for analyzing system behavior near critical points, particularly through the lens of the invariant manifold theorem. This perspective naturally extends to randomized algorithms via random dynamical systems theory. Recent work by Liu and Yuan~\cite{liu2024almost} demonstrates this approach, applying the invariant manifold theorem to prove saddle point avoidance for various stochastic methods. Chen et al.~\cite{chen2023global} developed quantitative finite-block analyses of random dynamical systems to establish convergence properties near saddle points.

The theory of random dynamical systems offers powerful analytical tools, including random stable, unstable, and center manifolds~\cites{arnold1998random, ruelle1979ergodic, ruelle1982characteristic, boxler1989stochastic, guo2016smooth, li2005sternberg}. These constructs capture sample-dependent geometric structures in the state space and, under generic conditions, typically exhibit low dimensionality. This characteristic makes them particularly well-suited for analyzing convergence probabilities to strict saddle points.

The work most closely related to ours is~\cite{chen2023global}, from which our approach differs in two key aspects. First, we eliminate a technical assumption on the objective function while adopting a more practical fixed-stepsize scheme. Second, whereas~\cite{chen2023global} employs linearized finite-block analysis, our work directly applies invariant manifold theory to establish convergence properties - an approach that yields both stronger theoretical guarantees and greater robustness in the analysis.

\subsection{Main results}

We first set up the notations before stating our main theorem. For each $t
\in \bN$, denote  $(\Omega_t,\Sigma_t,\bP_t)$ the usual probability space
for the distribution $\mathcal{U}_{\{1,2,\dots,n\}}$, where
$\mathcal{U}_{\{1,2,\dots,n\}}$ are the uniform distributions on the set
$\{1,2,\dots,n\}$, which is associated to the $t$-th iteration of
Algorithm~\ref{alg:RCGD}. Let $(\Omega,\mathcal{F},\bP)$ be the
product probability space of all $(\Omega_t,\Sigma_t,\bP_t),\ t\in\bN$,
and a sample $\omega\in\Omega$ can be represented as a sequence
$(i_0,i_1,i_2,\dots)$, where $i_t\in \Omega_t$. It is clear that the
iterates generated by Algorithm~\ref{alg:RCGD} is sample-dependent,
and we would use the notation $x_t = x_t(\omega)$ to clarify the
dependence if necessary. 

Moreover, the dynamics of Algorithm~\ref{alg:RCGD} depend on the
initialization $x_0\in \bR^d$ and the sample $\omega\in \Omega$. We set
$\Theta = \bR^d\times \Omega$ that is equipped with a product measure 
\begin{equation*}
    \mu = \textsc{Leb} \times \bP,
\end{equation*}
where $\textsc{Leb}$ is the Lebesgue measure on $\bR^d$.

The primary objects of interest in our analysis are strict saddle points,
and the collection of all strict saddle points is defined as
\begin{equation*}
    \crits := \left\{x\in\bR^d:\nabla f(x)=0,\
    \lambda_{\min}(\nabla^2 f(x))<0 \right\},
\end{equation*}
where $\lambda_{\min}(\nabla^2 f(x))$ is the smallest eigenvalue of the
Hessian matrix $\nabla^2 f(x)$ and we use the subscript $s$ to emphasize
that it is strict. 

Our main results are based on some standard and generic assumptions on the
objective function $f$. The first assumption is that $f$ is two times
continuously differentiable with a bounded and locally Lipschitz
continuous Hessian.

\begin{mainassumption}\label{Assump:Hess_bdd}
    The function $f \in \mathcal{C}^2(\bR^d)$ and the Hessian $\nabla^2 f$
    is uniformly bounded, i.e., there exists $M>0$ such that
    $\norm{\nabla^2 f(x)}\leq M$ for all $x\in\bR^d$. Moreover, for every
    $x^*\in \crits$, there exists a neighborhood $N(x^*)$ of $x^*$ where $\nabla^2 f$ is locally Lipschitz continuous, i.e., $\norm{\nabla^2 f(x) - \nabla^2
    f(y)} \leq L\norm{x-y}$ for all $x, y\in N(x^*)$, where $L = L(x^*) > 0$ is a constant depending on $x^*$.
\end{mainassumption}

Throughout this paper, $\|\cdot\|$ always represents the $\ell_2$-norm for
vectors or its induced matrix norm. Our next assumption is on the
non-degeneracy of the strict saddle points.

\begin{mainassumption}\label{Assump:non_degenerate}
    For every $x^*\in \crits$, $\nabla^2 f(x^*)$ is non-degenerate, i.e.,
    $x^*$ is a non-degenerate critical point of $f$ in the sense that any
    eigenvalue of $\nabla^2 f(x^*)$ is nonzero.
\end{mainassumption}
    
The main theorem of this paper states that the set of all $(x_0,\omega)$
with $x_t(\omega)$ converging to a strict saddle point in $\crits$ is of
measure zero. In particular, consider
\begin{equation*}
    \Theta(x^*) = \left\{(x_0,\omega)\in\Theta : \lim_{t\to + \infty}
    x_t(\omega) = x^*\right\},
\end{equation*}
for each $x^*\in \crits$, and
\begin{equation*}
    \Theta(\crits) = \bigcup_{x^*\in \crits} \Theta(x^*).
\end{equation*}

\begin{maintheorem}\label{thm:main}
    Suppose that Assumptions~\ref{Assump:Hess_bdd} and
    ~\ref{Assump:non_degenerate} hold and that step $0 < \alpha < 1/M$. It
    holds that
    \begin{equation*}
        \mu(\Theta(\crits)) = 0.
    \end{equation*}
\end{maintheorem}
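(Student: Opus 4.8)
The plan is to realize Algorithm~\ref{alg:RCGD} as a nonlinear random dynamical system and reduce the claim to a measure-zero statement obtained from the center-stable manifold theorem applied block-wise. First I would observe that a single step of the algorithm is $x_{t+1} = g_{i_t}(x_t)$, where $g_i(x) = x - \alpha e_i \partial_i f(x)$, and that over a block of $d$ consecutive iterations with sampled coordinates $(i_t, \dots, i_{t+d-1})$ the composite map $G_{\sigma} = g_{i_{t+d-1}} \circ \cdots \circ g_{i_t}$ is a $\mathcal{C}^1$ map of $\bR^d$. Since $0 < \alpha < 1/M$ and $\|\nabla^2 f\| \le M$, each $g_i$ is a diffeomorphism onto its image (its Jacobian $I - \alpha e_i e_i\trans \nabla^2 f(x)$ has the eigenvalue $1 - \alpha \partial_{ii}f(x) \in (0, 2)$ away from the hyperplane it fixes, and is the identity on $e_i^\perp$ in the relevant directions — more precisely $\det Dg_i(x) = 1 - \alpha \partial_{ii} f(x) > 0$), so each $G_\sigma$ is a $\mathcal{C}^1$ local diffeomorphism; this is exactly the hypothesis needed to invoke the center-stable manifold theorem of \cite{shub2013global}.

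Next I would fix $x^* \in \crits$ and analyze the local behavior near $x^*$. For each of the $d^d$ coordinate-blocks $\sigma \in \{1,\dots,d\}^d$, the point $x^*$ is a fixed point of $G_\sigma$ (since $\nabla f(x^*) = 0$ forces $g_i(x^*) = x^*$ for every $i$), and the differential $DG_\sigma(x^*)$ is a product of the matrices $I - \alpha e_{i}e_{i}\trans \nabla^2 f(x^*)$. The crucial linear-algebra step is to show that for at least one block $\sigma$ (in fact for the block $\sigma = (1,2,\dots,d)$, i.e. one sweep through all coordinates), $DG_\sigma(x^*)$ has a spectral radius exceeding $1$, equivalently has an eigenvalue of modulus $> 1$; this uses Assumption~\ref{Assump:non_degenerate} ($\nabla^2 f(x^*)$ nonsingular) together with $\lambda_{\min}(\nabla^2 f(x^*)) < 0$ and $0 < \alpha < 1/M$ — the one-sweep Gauss–Seidel-type iteration matrix associated with an indefinite nonsingular symmetric matrix is not a contraction. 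Given such a $\sigma$, the center-stable manifold theorem yields a local $\mathcal{C}^1$ embedded manifold $W^{cs}_\sigma(x^*) \subset N(x^*)$ of dimension $< d$ with the property that any orbit of $G_\sigma$ that stays in $N(x^*)$ and converges to $x^*$ must lie in $W^{cs}_\sigma(x^*)$; in particular $W^{cs}_\sigma(x^*)$ has Lebesgue measure zero in $\bR^d$.

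Then I would lift this to the product space $\Theta = \bR^d \times \Omega$. If $(x_0,\omega) \in \Theta(x^*)$, then $x_t(\omega) \to x^*$, so there exists $T$ with $x_t(\omega) \in N(x^*)$ for all $t \ge T$, and along the subsequence $t = T, T+d, T+2d,\dots$ the iterate evolves by the composite maps $G_{\sigma_k}$ determined by the $k$-th block of $\omega$ after time $T$. By the law of large numbers / Borel–Cantelli, almost surely the specific block $\sigma$ (for which $W^{cs}_\sigma$ is measure zero) occurs infinitely often among the $\sigma_k$; at the first such occurrence, say starting at time $T' \ge T$, the tail orbit $(x_{T'+jd}(\omega))_{j\ge 0}$ is a $G_\sigma$-orbit converging to $x^*$, hence lies in $W^{cs}_\sigma(x^*)$, hence $x_{T'}(\omega) \in W^{cs}_\sigma(x^*)$. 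Pulling back through the finitely many diffeomorphisms $g_{i_0},\dots,g_{i_{T'-1}}$ (all $\mathcal{C}^1$ with nonvanishing Jacobian), $x_0$ lies in the preimage of a measure-zero set, which is again measure zero. Organizing this as a countable union over $T, T'$, over the finitely many rational balls covering $\crits$, and over the finitely many block patterns, and applying Fubini on $\Theta$, gives $\mu(\Theta(x^*)) = 0$; covering $\crits$ by countably many such neighborhoods (it is a subset of $\bR^d$, hence Lindelöf) yields $\mu(\Theta(\crits)) = 0$.

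The main obstacle I expect is the linear-algebra step: verifying that some coordinate-block iteration matrix $DG_\sigma(x^*)$ genuinely has a spectral radius $> 1$ at a strict saddle, and identifying a clean, checkable block $\sigma$ for which this holds for every such $x^*$. The symmetric Gauss–Seidel / successive-coordinate-descent iteration matrix for an indefinite matrix need not be symmetric, so one cannot argue purely via eigenvalue interlacing; instead I would look at the energy $\langle \nabla^2 f(x^*) v, v\rangle$ along a negative-curvature eigenvector and track how one sweep of exact coordinate minimization (which is what $g_i$ does to second order, since $1-\alpha\partial_{ii}f \approx 1$ only when $\alpha$ is small — here $\alpha$ is merely $<1/M$, so a more careful monotonicity argument on the quadratic model is needed) strictly decreases the quadratic, forcing the associated linear map to expand in the corresponding invariant subspace. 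A secondary subtlety is ensuring the center-stable manifold is genuinely lower-dimensional: this requires the unstable subspace of $DG_\sigma(x^*)$ to be nontrivial, which is precisely the content of the spectral-radius bound combined with nondegeneracy ruling out eigenvalues exactly on the unit circle from the "center" part that could otherwise inflate the dimension — here Assumption~\ref{Assump:non_degenerate} does the work of guaranteeing hyperbolicity of the relevant block map.
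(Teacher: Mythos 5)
There is a genuine gap in your probabilistic reduction, and it is exactly the point that forces the paper into the random dynamical systems machinery. You argue: the block $\sigma$ occurs at some time $T'$, hence ``the tail orbit $(x_{T'+jd}(\omega))_{j\ge 0}$ is a $G_\sigma$-orbit converging to $x^*$, hence lies in $W^{cs}_\sigma(x^*)$.'' This is false. The deterministic center-stable manifold theorem of Shub characterizes points whose forward orbit under \emph{repeated iteration of the same map} $G_\sigma$ stays near $x^*$ and converges to it. In Algorithm~\ref{alg:RCGD} the block $\sigma$ is applied exactly once at time $T'$; the subsequent blocks $\sigma_{k+1},\sigma_{k+2},\dots$ are different (the event that the sampled coordinates are eventually the periodic repetition of a fixed $\sigma$ has probability zero), so the tail trajectory is an orbit of a time-varying random composition, and a single occurrence of $G_\sigma$ gives no membership in $W^{cs}_\sigma(x^*)$. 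Occurrence of $\sigma$ infinitely often does not help either, since other maps intervene between occurrences. What is actually needed is a sample-dependent invariant manifold for the non-autonomous product of coordinate-update maps, i.e.\ the \emph{random} center-stable manifold $W^{cs}(\omega^e)$, whose existence and dimension bound rest on the multiplicative ergodic theorem for the random matrix products $\Phi^H(t,\omega^e)$ (positivity of the top Lyapunov exponent, Proposition~\ref{prop:positive_Lyapunov}), not on the spectrum of any single block Jacobian $DG_\sigma(x^*)$. This is the route the paper takes, and it also requires the two-sided time extension and global invertibility of each step map, which your sketch does not address.

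A second, related omission: even with the correct (random) center-stable manifold theorem, its hypothesis~\eqref{eq:local_sequence} demands that the trajectory stay inside a ball of \emph{tempered} random radius $\rho(\theta^t\omega^e)$ for all $t$, which mere convergence $x_t\to x^*$ does not guarantee. The paper closes this with Proposition~\ref{prop:converexpo} (almost every convergent trajectory converges exponentially), proved via a martingale law of large numbers for the frequency of ``good'' coordinates and a local geometric lemma near the saddle; nothing in your outline plays this role. By contrast, your flagged ``main obstacle'' (spectral radius of the cyclic-sweep Jacobian exceeding $1$) is a real but known fact in the deterministic cyclic setting and is not where the difficulty lies here; the correct analogue in the randomized setting is the positive Lyapunov exponent of the random product, which the paper imports from \cite{chen2023global}.
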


Moreover, with some additional but still standard assumptions, we prove
that $x_t(\omega)$ always converges to a critical point without negative
Hessian eigenvalues, unless $(x_0,\omega)$ is located in a $\mu$-null set.
More specifically, denote by
\begin{equation*}
    \crit :=\{x\in\bR^d:\nabla f(x)=0\},
\end{equation*}
the set of all critical points of $f$, and we have the following corollary.

\begin{maincorollary}\label{cor:global_conv}
    Suppose that Assumptions~\ref{Assump:Hess_bdd} and
    ~\ref{Assump:non_degenerate} hold and that stepsize $\alpha$ satisfies
    $0 < \alpha < 1/M$. Suppose in addition that every $x^*\in\crit$ is an
    isolated critical point and that the level set $L_c(f):=
    \{x\in\bR^d:f(x)\leq c\}$ is bounded for all $c\in\bR$. Then there
    exists $\widehat{\Theta}\subseteq\Theta$ with
    $\mu(\Theta\setminus\widehat{\Theta}) = 0$, such that
    $\{x_t(\omega)\}_{t\in\bN}$ is convergent with the limit in
    $\crit\setminus\crits$ for any $(x_0,\omega)\in\widehat{\Theta}$.
\end{maincorollary}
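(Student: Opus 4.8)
\textbf{Proof proposal for Corollary~\ref{cor:global_conv}.}
The plan is to combine a deterministic sufficient-decrease estimate with the measure-zero conclusion of Theorem~\ref{thm:main}. First I would establish the descent inequality: since $\norm{\nabla^2 f}\le M$, a second-order Taylor expansion along the update $x_{t+1}-x_t=-\alpha e_{i_t}\partial_{i_t}f(x_t)$ gives, for every sample path,
\[
 f(x_{t+1})\le f(x_t)-\alpha\Bigl(1-\tfrac{M\alpha}{2}\Bigr)\bigl(\partial_{i_t}f(x_t)\bigr)^2 ,
\]
and $0<\alpha<1/M$ makes the constant $\kappa:=\alpha(1-M\alpha/2)$ strictly positive. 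Hence $t\mapsto f(x_t(\omega))$ is nonincreasing, so $x_t(\omega)\in L_{f(x_0)}(f)$ for all $t$; by the boundedness-of-level-sets hypothesis the whole trajectory stays in the fixed compact set $K=K(x_0):=L_{f(x_0)}(f)$, on which $f$ attains a finite minimum $m=m(x_0)$. Telescoping the descent inequality then yields $\sum_{t\ge 0}(\partial_{i_t}f(x_t))^2\le (f(x_0)-m)/\kappa<\infty$, and since $\norm{x_{t+1}-x_t}=\alpha|\partial_{i_t}f(x_t)|$ we obtain $\norm{x_{t+1}-x_t}\to 0$.

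Next I would upgrade the coordinatewise decay $\partial_{i_t}f(x_t)\to 0$ to $\nabla f(x_t)\to 0$ almost surely. Fix $x_0$ and let $\mathcal F_t=\sigma(i_0,\dots,i_{t-1})$. Because $i_t$ is uniform on $\{1,\dots,d\}$ and independent of $\mathcal F_t$, one has $\bE[(\partial_{i_t}f(x_t))^2\mid\mathcal F_t]=\tfrac1d\norm{\nabla f(x_t)}^2$; taking conditional then total expectation in the descent inequality and telescoping gives $\tfrac{\kappa}{d}\sum_{t\ge0}\bE\norm{\nabla f(x_t)}^2\le f(x_0)-m<\infty$. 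By Tonelli, $\sum_{t\ge 0}\norm{\nabla f(x_t)}^2<\infty$ for $\bP$-a.e.\ $\omega$, hence $\nabla f(x_t(\omega))\to 0$ for $\bP$-a.e.\ $\omega$; applying Fubini over $x_0\in\bR^d$ shows that $\{(x_0,\omega):\nabla f(x_t(\omega))\not\to 0\}$ is $\mu$-null.

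Then I would invoke the classical fact that a bounded sequence with $\norm{x_{t+1}-x_t}\to 0$ has a nonempty, compact, connected set of accumulation points. For $(x_0,\omega)$ outside the $\mu$-null set above, every accumulation point lies in $\crit$ by continuity of $\nabla f$, so the accumulation set is a connected subset of $\crit\cap K$; since all critical points are isolated and $K$ is compact, $\crit\cap K$ is finite, forcing the accumulation set to be a single point $x^*$. Thus $x_t(\omega)\to x^*\in\crit$ for $\mu$-a.e.\ $(x_0,\omega)$. Finally, removing also the $\mu$-null set $\Theta(\crits)$ supplied by Theorem~\ref{thm:main}, I let $\widehat\Theta$ be the complement of the union of these two null sets; then $\mu(\Theta\setminus\widehat\Theta)=0$ and on $\widehat\Theta$ the iterates converge to a point of $\crit\setminus\crits$.

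The main obstacle I anticipate is the passage from $\partial_{i_t}f(x_t)\to 0$ to $\nabla f(x_t)\to 0$: a purely pathwise argument would need to control the drift of the untouched gradient coordinates between successive visits of a fixed coordinate, which is awkward; the conditional-expectation/Tonelli route sidesteps this cleanly, but it requires first fixing $x_0$ so that $K$ and $m$ are deterministic and only afterwards integrating over the initialization. A secondary delicate point is the connectedness of the limit set together with the finiteness of $\crit\cap K$, which is precisely where the isolated-critical-point and bounded-level-set hypotheses enter.
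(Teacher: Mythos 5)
Your argument is correct, but it follows a different route from the paper's. The paper disposes of this corollary in two lines: it asserts that, by the same arguments as in the cited reference (\cite[Proofs of Propositions 4.11 and 4.12]{guo2016smooth}), $x_t(\omega)$ converges to a point of $\crit$ outside a $\mu$-null set, and then removes the $\mu$-null set $\Theta(\crits)$ furnished by Theorem~\ref{thm:main}; only the second step is shared with your write-up. You instead supply a self-contained proof of the convergence-to-a-critical-point part: the sufficient-decrease inequality with constant $\alpha(1-M\alpha/2)>0$ (the paper derives the same inequality in \eqref{taylor}, with the cruder constant $\alpha/2$, but only uses it inside the saddle-point analysis), confinement of the trajectory to the compact level set $L_{f(x_0)}(f)$, the conditional-expectation identity $\bE[(\partial_{i_t}f(x_t))^2\mid\mathcal F_t]=\frac1d\norm{\nabla f(x_t)}^2$ followed by Tonelli to get $\nabla f(x_t)\to0$ for $\bP$-a.e.\ $\omega$ and then Fubini over $x_0$, and finally the Ostrowski-type fact that a bounded sequence with vanishing successive increments has a connected limit set, which together with the isolated-critical-point hypothesis forces convergence to a single $x^*\in\crit$. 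All of these steps check out (the passage from summable $(\partial_{i_t}f(x_t))^2$ to $\nabla f(x_t)\to0$ indeed cannot be done pathwise, and your expectation argument is the right fix; the only point you leave implicit is the joint measurability of $(x_0,\omega)\mapsto x_t(\omega)$ needed for Fubini, which is routine and is also used without comment elsewhere in the paper). The trade-off is clear: the paper's proof is shorter but leans entirely on an external reference for the almost-sure convergence statement, whereas your version makes the corollary self-contained at the cost of a paragraph of standard descent-method analysis.
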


\subsection{Organization}

The rest of this paper is devoted to the proofs of the results stated
above and is organized as follows. In Section~\ref{sec:RDS}, we present
some preliminaries on random dynamical systems, which are the foundations
of our analysis. Section~\ref{sec:proof_sketch} outlines the proof with a
comparison to~\cite{chen2023global}. All technical lemmas and propositions
are deferred to Section~\ref{sec:technical_lems}.

\section{Preliminaries on random dynamical systems}\label{sec:RDS}

The dynamics of Algorithm~\ref{alg:RCGD} can be rigorously characterized using the notion of random dynamical systems. In particular, given the initilization $x_0$, the trajectory $\{x_t\}_{t\in \bN}$ is fully determined by a random sample $\omega$.  Therefore, analytical tools developed for random dynamical systems would be useful for analyzing the behavior of Algorithm~\ref{alg:RCGD}. This section briefly reviews some fundamental results in random dynamical systems. For a more detailed introduction, we refer the readers to ~\cite{arnold1998random, li2005sternberg, guo2016smooth} and references therein.

\subsection{Definition of random dynamical system}
Consider a probability space $(\Omega, \mathcal{F}, \mathbb{P})$ and let $\mathbb{T}$ denote a semigroup equipped with its Borel $\sigma$-algebra $\mathcal{B}(\mathbb{T})$, playing the role of time, with the convention that $0 \in \mathbb{T}$. Here, $(\Omega, \mathcal{F}, \mathbb{P})$ represents a general probability space, not necessary the one associated to Algorithm~\ref{alg:RCGD}.
Common choices for $\mathbb{T}$ include $\bN$, $\mathbb{Z}$, $\mathbb{R}_{\geq 0}$, and $\mathbb{R}$. The random dynamical system is defined as follows.

\begin{definition}[Metric dynamical system] A metric dynamical system on a probability space $(\Omega,\mathcal{F},\bP)$ is a family of maps $\{\theta(t):\Omega\rightarrow\Omega\}_{t\in\mathbb{T}}$ satisfying that
    \begin{itemize}
        \item[(i)] The mapping $\mathbb{T}\times\Omega\rightarrow\Omega,\ (t,\omega)\mapsto\theta(t)\omega$ is measurable;
        
        \item[(ii)] It holds that $\theta(0)=\mathrm{Id}_{\Omega}$ and $\theta(t+s)=\theta(t)\circ\theta(s),\ \forall\ s,t\in\mathbb{T}$;
        
        \item[(iii)] $\theta(t)$ is $\bP$-preserving for any $t\in\mathbb{T}$, where we say a map $\theta:\Omega\rightarrow\Omega$ is $\bP$-preserving if 
        \begin{equation*}
            \bP(\theta^{-1}B)=\bP(B),\quad \forall\ B\in\mathcal{F}.
        \end{equation*}
    \end{itemize}
\end{definition}

\begin{definition}[Random dynamical system]
    Let $(X,\mathcal{F}_X)$ be a measurable space and let $\{\theta(t):\Omega\rightarrow\Omega\}_{t\in\mathbb{T}}$ be a metric dynamical system on $(\Omega,\mathcal{F},\bP)$. Then a random dynamical system on $(X,\mathcal{F}_X)$ 
    over $\{\theta(t)\}_{t\in\mathbb{T}}$ is a measurable map 
    \begin{equation*}
        \begin{split}
            \varphi:\mathbb{T}\times\Omega\times X&\rightarrow\quad X,\\
            (t,\omega,x)\ \  & \mapsto \varphi(t,\omega,x),
        \end{split}
    \end{equation*}
    satisfying the following cocycle property: for any $\omega\in\Omega$, $x\in X$, and $s,t\in \mathbb{T}$, it holds that 
    \begin{equation*}
        \varphi(0,\omega,x)=x,
    \end{equation*}
    and that
    \begin{equation}\label{cocycle}
        \varphi(t+s,\omega,x)=\varphi(t,\theta(s)\omega,\varphi(s,\omega,x)).
    \end{equation}
\end{definition}
The cocycle property \eqref{cocycle} plays a fundamental role in the theory of random dynamical systems. Intuitively, it means that if the system evolves for time $s$ and reaches the state $x_s = \varphi(s,\omega,x)$, then continuing the evolution is equivalent to restarting the system from $x_s$ with a shifted random sample $\theta(s)\omega$. In other words, the metric dynamical system $\theta(s)$ maps $\omega$ to another sample controlling the dynamics starting at time $s$. The map $\varphi(t, \omega, \cdot)$ acts on the space $X$; for convenience and with a slight abuse of notation, we also denote this map by $\varphi(t, \omega)$, and write $\varphi(t, \omega)x$ in place of $\varphi(t, \omega, x)$. Under this notation, the cocycle property \eqref{cocycle} takes the form:
\begin{equation*}
    \varphi(t + s, \omega) = \varphi(t, \theta(s)\omega) \circ \varphi(s, \omega).
\end{equation*}

In this work, we restrict our attention to the two-sided discrete-time case $\mathbb{T} = \mathbb{Z}$, which corresponds to the dynamics of Algorithm~\ref{alg:RCGD} and its inverse system. The inversion is included due to a technical reason stated later. We only consider the metric dynamical system $\theta(t) = \theta^t$, with $\theta:\Omega\to\Omega$ being a $\mathbb{P}$-preserving map and $\theta^t$ denotes the $t$-fold composition of $\theta$. We also assume that the state space is $X = \mathbb{R}^d$ throughout this paper.

\subsection{Multiplicative ergodic theorem}
Let $A: \Omega \to \mathrm{GL}(d, \mathbb{R})$ be a measurable map. We consider a linear random dynamical system defined by
\begin{equation*}
    x_t = \Phi(t, \omega)x_0,
\end{equation*}
where the $\Phi(t, \omega)$ is a product of random matrices
\begin{equation*}
    \Phi(t, \omega) = \begin{cases}
        A(\theta^{t-1}\omega) \cdots A(\theta\omega) A(\omega), &\text{if } t>0, \\
        I, & \text{if }t =0,\\
        A(\theta^{t}\omega)^{-1} \cdots A(\theta^{-2}\omega)^{-1} A(\theta^{-1}\omega)^{-1}, &\text{if } t<0.
    \end{cases} 
\end{equation*}
In this setting, the evolution of the system is well characterized by the celebrated multiplicative ergodic theorem (also known as Oseledets' theorem), which we state as Theorem~\ref{thm:MET}. We use the notation $\Phi$ for this linear system, reserving $\varphi$ for the nonlinear dynamics.

\begin{theorem}[Multiplicative ergodic theorem, {~\cite[Theorem 3.4.11]{arnold1998random}}]
    \label{thm:MET}Suppose that 
    \begin{equation}\label{pre:MET}
        (\log\|A(\cdot)\|)_+,\ (\log\|A(\cdot)^{-1}\|)_+\in L^1(\Omega,\mathcal{F},\bP),
    \end{equation}
    where we have used the short-hand $a_+:=\max\{a,0\}$. Then there exists an $\theta$-invariant $\widetilde{\Omega}\in \mathcal{F}$ with $\bP(\widetilde{\Omega})=1$, such that the followings hold for any $\omega\in\widetilde{\Omega}$:
    \begin{itemize}
        \item[(i)] It holds that the limit
        \begin{equation}\label{eq:lim_Lambda_omega}
        \Lambda(\omega)=\lim_{t\rightarrow + \infty}\left(\Phi(t,\omega)^{\top}\Phi(t,\omega)\right)^{1/2t}
        \end{equation}
        exists and is a positive definite matrix. Here $\Phi(t, \omega)^{\top}$ denotes the transposition of the matrix (as $\Phi(t, \omega)$ is a linear map on $X$).
        \item[(ii)] Suppose  $\Lambda(\omega)$ has $p(\omega)$ distinct eigenvalues, which are ordered as $e^{\lambda_1(\omega)}>e^{\lambda_2(\omega)}>\cdots> e^{\lambda_{p(\omega)}(\omega)}>0$, and let $V_i(\omega)$ be the eigenspace associated with $e^{\lambda_i(\omega)}$ with dimension $d_i(\omega)$, for $i=1,2,\dots,p(\omega)$. Then the functions $p(\cdot)$, $\lambda_i(\cdot)$, and $d_i(\cdot)$, $i=1,2,\dots,p(\cdot)$, are all measurable and  $\theta$-invariant on $\widetilde{\Omega}$.
        \item[(iii)] There exists a splitting
        \begin{equation}\label{eq:Oseledets_split}
            \bR^d = E_1(\omega) \oplus E_2(\omega) \oplus \cdots\oplus E_{p(\omega)}(\omega)
        \end{equation}
        of $\bR^d$ into random subspaces $E_i(\omega)$ with $\mathrm{dim} \, E_i(\omega) = d_i(\omega)$, such that
        \begin{equation}\label{conv:Lya_exp}
            \lim_{t\to\pm\infty} \frac{1}{t} \log\norm{\Phi(t,\omega) x_0} = \lambda_i(\omega) ~~\Longleftrightarrow ~~ x_0\in E_i(\omega)\setminus \{0\},
        \end{equation}
        and
        \begin{equation*}
            A(\omega) E_i(\omega) = E_i(\theta\omega).
        \end{equation*}
\item[(iv)] When $(\Omega,\mathcal{F},\bP,\theta)$ is ergodic, i.e., every $B\in\mathcal{F}$ with $\theta^{-1}B=B$ satisfies $\bP(B)=0$ or $\bP(B)=1$, the functions  
$p(\cdot)$, $\lambda_i(\cdot)$, and $d_i(\cdot)$, $i=1,2,\dots,p(\cdot)$, are constant on $\widetilde{\Omega}$.
    \end{itemize}
\end{theorem}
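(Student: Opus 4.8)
Theorem~\ref{thm:MET} is Oseledets' multiplicative ergodic theorem, a classical and fairly deep result; the plan is to follow the subadditive (Raghunathan-style) route that underlies the cited reference. The first step extracts the Lyapunov exponents. For each $1\le k\le d$ I would pass to the $k$-th exterior power cocycle $\Lambda^k\Phi(t,\omega)$ acting on $\Lambda^k\bR^d$, which is again a linear cocycle over $\theta$; submultiplicativity of the operator norm and the cocycle property \eqref{cocycle} make $n\mapsto\log\norm{\Lambda^k\Phi(n,\omega)}$ subadditive. Hypothesis \eqref{pre:MET} gives $(\log\norm{\Lambda^k\Phi(1,\cdot)})_+\in L^1$, while the pointwise bound $\log\norm{\Lambda^k\Phi(n,\omega)}\ge-\log\norm{\Lambda^k\Phi(n,\omega)^{-1}}$, again with \eqref{pre:MET}, keeps the Birkhoff averages bounded below. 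Kingman's subadditive ergodic theorem then yields, for $\bP$-a.e.\ $\omega$, convergence of $\tfrac1n\log\norm{\Lambda^k\Phi(n,\omega)}$ to a finite $\theta$-invariant limit $\gamma_k(\omega)$, constant when the system is ergodic. Since $\norm{\Lambda^k\Phi(n,\omega)}$ is the product of the $k$ largest singular values of $\Phi(n,\omega)$, the differences $\mu_k(\omega):=\gamma_k(\omega)-\gamma_{k-1}(\omega)$ (with $\gamma_0:=0$) are nonincreasing in $k$; their distinct values, in decreasing order, are the exponents $\lambda_1(\omega)>\cdots>\lambda_{p(\omega)}(\omega)$, and $d_i(\omega)$ is the number of indices $k$ with $\mu_k(\omega)=\lambda_i(\omega)$. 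Measurability, $\theta$-invariance, and (in the ergodic case) constancy of $p(\cdot),\lambda_i(\cdot),d_i(\cdot)$ are inherited from Kingman's theorem, which disposes of parts (ii) and (iv).

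Next I would establish the limit \eqref{eq:lim_Lambda_omega}. Using the spectral decomposition $\Phi(n,\omega)^{\top}\Phi(n,\omega)=\sum_j\sigma_j(n,\omega)^2\,f_j(n,\omega)f_j(n,\omega)^{\top}$ with $\{f_j(n,\omega)\}$ orthonormal, the first step already gives $\sigma_j(n,\omega)^{1/n}\to e^{\mu_j(\omega)}$; what remains is to show that for each distinct $\lambda_i(\omega)$ the span of those eigenvectors $f_j(n,\omega)$ whose singular values are asymptotic to $e^{n\lambda_i(\omega)}$ converges as $n\to\infty$. Writing $\Phi(n+1,\omega)=A(\theta^n\omega)\Phi(n,\omega)$ and applying a Davis--Kahan type spectral-gap estimate, I would bound the angle between the ``top $\ell$ singular directions'' at step $n$ and at step $n+1$ by a constant times $\norm{A(\theta^n\omega)}\,\norm{A(\theta^n\omega)^{-1}}\,e^{-(\lambda_i(\omega)-\lambda_{i+1}(\omega))n}$; since \eqref{pre:MET} forces $\tfrac1n\log\norm{A(\theta^n\omega)}\to 0$ and $\tfrac1n\log\norm{A(\theta^n\omega)^{-1}}\to 0$ for $\bP$-a.e.\ $\omega$ (Borel--Cantelli applied to the integrable tail events), these angle bounds are summable, the relevant flag of subspaces is Cauchy, and its limit supplies the eigenspaces of a positive-definite matrix $\Lambda(\omega)$ with $\log$-eigenvalues $\lambda_i(\omega)$ and eigenspace dimensions $d_i(\omega)$; this is part (i).

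For the splitting \eqref{eq:Oseledets_split}, let $V_i(\omega)$ be the eigenspace of $\Lambda(\omega)$ for $e^{\lambda_i(\omega)}$ and set $W_i^{+}(\omega):=\bigoplus_{j\ge i}V_j(\omega)$, the forward Lyapunov filtration; one checks $A(\omega)W_i^{+}(\omega)=W_i^{+}(\theta\omega)$ and $v\in W_i^{+}(\omega)\setminus W_{i+1}^{+}(\omega)\iff\lim_{t\to+\infty}\tfrac1t\log\norm{\Phi(t,\omega)v}=\lambda_i(\omega)$. Applying the two previous steps to the inverse cocycle---generated by $\omega\mapsto A(\theta^{-1}\omega)^{-1}$ over $\theta^{-1}$, which still obeys \eqref{pre:MET} because $\theta$ is $\bP$-preserving; this is exactly why the two-sided time $\mathbb{T}=\bZ$ and the invertibility of $A$ were imposed---produces a backward filtration $\{W_i^{-}(\omega)\}$ with the opposite nesting and $\dim W_i^{-}(\omega)=\sum_{j\le i}d_j(\omega)$. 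A standard transversality count then shows $E_i(\omega):=W_i^{+}(\omega)\cap W_i^{-}(\omega)$ has dimension $d_i(\omega)$, that $\bR^d=\bigoplus_i E_i(\omega)$, that $v\in E_i(\omega)\setminus\{0\}$ iff $\lim_{t\to\pm\infty}\tfrac1t\log\norm{\Phi(t,\omega)v}=\lambda_i(\omega)$ (establishing \eqref{conv:Lya_exp}), and that $A(\omega)E_i(\omega)=E_i(\theta\omega)$ follows from the equivariance of the two filtrations together with $\Phi(t,\theta\omega)A(\omega)=\Phi(t+1,\omega)$.

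I expect the main obstacle to be the second step: Kingman's theorem delivers the asymptotics of the singular \emph{values} almost for free, whereas upgrading this to convergence of the singular \emph{subspaces}---equivalently, to the existence of $\Lambda(\omega)$---is where essentially all the effort lies, and the Borel--Cantelli estimate has to be arranged so that the bare integrability \eqref{pre:MET}, rather than any stronger moment condition, suffices.
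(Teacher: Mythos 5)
The paper does not prove Theorem~\ref{thm:MET} at all: it is imported verbatim as a classical result, cited to \cite{arnold1998random}*{Theorem 3.4.11}, and the authors' own work only begins downstream of it (verifying \eqref{pre:MET} for the linearized cocycle, ergodicity via Kolmogorov's zero--one law, etc.). So there is no in-paper argument to compare against; what you have written is a sketch of the theorem itself.

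As a sketch, your route is the standard one and is essentially the proof strategy behind the cited reference: Kingman's subadditive ergodic theorem applied to the exterior-power cocycles $\Lambda^k\Phi$ to produce the $\theta$-invariant exponents (your parts (ii) and (iv)), convergence of the singular subspaces to obtain the limit \eqref{eq:lim_Lambda_omega} (Goldsheid--Margulis/Raghunathan style), and then intersection of the forward flag with the backward flag coming from the inverse cocycle over $\theta^{-1}$ to get the splitting \eqref{eq:Oseledets_split} and the two-sided characterization \eqref{conv:Lya_exp} -- correctly noting that invertibility of $A$ and two-sided time are exactly what make the backward filtration available. Your identification of where the work lies is accurate, but be aware that the two places you wave at are genuinely the bulk of the proof: (a) the Davis--Kahan-type step is not literally a bound of the form $C\,\norm{A(\theta^n\omega)}\,\norm{A(\theta^n\omega)^{-1}}\,e^{-(\lambda_i-\lambda_{i+1})n}$; the correct estimate compares ratios of singular values of $\Phi(n,\omega)$ and $\Phi(n+1,\omega)$ and has subexponential prefactors, and one must check these are $e^{o(n)}$ (via Birkhoff or the Borel--Cantelli argument you mention, using only \eqref{pre:MET}) so that the exponential gap still gives summability; and (b) the ``standard transversality count'' showing $\dim\bigl(W_i^{+}(\omega)\cap W_i^{-}(\omega)\bigr)=d_i(\omega)$ and $\bR^d=\bigoplus_i E_i(\omega)$ almost surely is itself a nontrivial lemma (general position of the two flags must be proved from invariance and the exponent characterization, not assumed), as is the claim that $v\in W_i^{+}(\omega)\setminus W_{i+1}^{+}(\omega)$ iff the forward growth rate is $\lambda_i(\omega)$, which requires an argument on top of the mere existence of $\Lambda(\omega)$. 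None of this is a defect relative to the paper -- the authors quote the theorem precisely to avoid reproducing this machinery -- but if you intended your outline as a self-contained proof, those two steps are where it is incomplete.
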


In the rest of Section~\ref{sec:RDS}, we slightly abuse the notation by denoting $\widetilde{\Omega}$ by $\Omega$ for simplicity. In other words, we assume that all statements in Theorem~\ref{thm:MET} are true for every $\omega$, with the null set $\Omega\setminus\widetilde{\Omega}$ being removed in advance.

In Theorem~\ref{thm:MET}, $\lambda_i(\omega),\ i=1,2,\dots,p_i(\omega)$ are the so-called Lyapunov exponents, $E_i(\omega),\ i=1,2,\dots,p_i(\omega)$ are named as the Oseledets subspaces, and the splitting \eqref{eq:Oseledets_split} is termed Oseledets splitting. The limiting behavior of $x_t(\omega)$ is characterized precisely by \eqref{conv:Lya_exp}. In particular, $\norm{x_t(\omega)}$ grows exponentially as $t\to+\infty$ if $x_0$ has nontrivial projection onto at least one $E_i(\omega)$ with $\lambda_i(\omega)>0$, along the spliting \eqref{eq:Oseledets_split}. Otherwise, $\norm{x_t(\omega)}$ either decays exponentially or has sub-exponential behavior as $t\to+\infty$. This observation motivates the following decomposition:
\begin{equation*}
    \bR^d = E_u(\omega) \oplus E_{cs}(\omega),
\end{equation*}
where
\begin{equation*}
    E_u(\omega) = \bigoplus_{\lambda_i>0} E_i(\omega), \quad\text{and}\quad E_{cs}(\omega) = \bigoplus_{\lambda_i\leq 0} E_i(\omega).
\end{equation*}
We call $E_u(\omega)$ the unstable Oseledets subspace and $E_{cs}(\omega)$ the center-stable Oseledets subspace. It can be seen that a necessary condition that $x_t(\omega)$ converges to $0$, or even stays bounded, as $t\to+\infty$ is that $x_0(\omega)\in E_{cs}(\omega)$. Though we mainly focus on $t\to+\infty$ as it is of particular interest in the context of Algorithm~\ref{alg:RCGD}, we remark that similar limit behavior is also true for $t\to-\infty$ since \eqref{conv:Lya_exp} states a two-sided limit.

\subsection{Center-stable manifold theorem} For a nonlinear random dynamical system $\varphi(t,\omega,x)$, we say that $x^*$ is an equilibrium if
\begin{equation*}
    \varphi(t,\omega,x^*) = x^*,\quad\forall~t\in \bZ,\ \omega\in\Omega.
\end{equation*}
The behavior of the system near $x^*$ can be approximated by its linearization at $x^*$. More specifically, there is a local manifold, termed the center-stable manifold, that is tangent to the center-stable Oseledets subspace associated with the linearized system, such that a necessary condition that $x_t$ converges to $x^*$ is that $x_0$ lies on the center-stable manifold. To state the theorem rigorously, we write
\begin{equation}\label{eq:linearization}
    \varphi(t,\omega,x) = \Phi(t,\omega)x + F(t,\omega,x),
\end{equation}
where the equilibrium is assumed to be $x^* = 0$ without loss of generality and $\Phi(t,w)$ is the linearized system given by $\Phi(t,w) = D_x \varphi(t,\omega,0)$, and make the following assumption.

\begin{definition}[Tempered random variable]
    We say that a random variable $R: \Omega \to (0,\infty)$ is tempered from above if
    \begin{equation*}
    \lim_{t \to \pm \infty} \frac{1}{t} \left( \log R(\theta^t \omega)\right)_{+} = 0
    \end{equation*}
    holds almost surely, and a random variable $R: \Omega \to (0,\infty)$ is called tempered from below if $1/R$ is tempered from above. Moreover, a random variable is called tempered if it is both tempered from above and tempered from below.
\end{definition}

\begin{assumption}\label{asp:SMT}
    We assume that the random dynamical system $\varphi(t,\omega,x)$ in \eqref{eq:linearization} with an equilibrium $x^*=0$ satisfies the following conditions:
    \begin{itemize}
        \item[(i)] $\varphi(t,\omega,x)$ is a $\mathcal{C}^1$ random dynamical system, and $\Phi (t,\omega) = D_x\varphi(t,\omega,0)$ satisfies the condition~\eqref{pre:MET}. 
        \item[(ii)] There is a ball $U (\omega) = \{x \in \bR^d: \norm{x} <\rho_0(\omega)\}$ where $\rho_0 : \Omega \to (0, \infty)$ is tempered from below such that
        \begin{equation}\label{eq:asp_SMT}
            \begin{split}
                &\sup_{x\in U (\omega)} \norm{D_x^k F (1,\omega,x)}\leq B_k(\omega),\quad\forall~ 0 \leq k \leq 1 ,\ \omega \in \Omega, \\
                &\norm{D_x F (1,\omega,x)}\leq B(\omega) \norm{x}, \quad\forall~ x\in U (\omega),\ \omega \in \Omega,
            \end{split}
        \end{equation}
        where $B, B_k:\Omega\to(0,+\infty)$ is tempered from above for $k=0,1$.
    \end{itemize}
\end{assumption}

\begin{remark}
    Assumption~\ref{asp:SMT} presents a minor modification of the conditions employed in~\cite{guo2016smooth, li2005sternberg} according to their proofs. This assumption aligns with \cite[Lemma 7.5.11]{arnold1998random}.
\end{remark}

Now we can state the center-stable manifold theorem.

\begin{theorem}[Center-stable manifold theorem ~\cite{arnold1998random, guo2016smooth, li2005sternberg}] \label{thm:SMT}
Let $\varphi(t,\omega,x)$ be a random dynamical system as in \eqref{eq:linearization} with an equilibrium $x^*=0$ and satisfy Assumption~\ref{asp:SMT}. Then there exist a tempered variable $\rho:\Omega\to(0,+\infty)$ and a measurable function
    \begin{equation*}
        h^{cs} : E_{cs}(\omega) \times \Omega \to E_u(\omega),
    \end{equation*}
    such that the following are satisfied:
    \begin{itemize}
        \item[(i)]  $h^{cs}(y,\omega)$ is measurable in $(y,\omega)$ and is $\mathcal{C}^1$ in $y$ with
        \begin{equation*}
            \mathrm{Lip} \, h^{cs}(\cdot, \omega)<1,\ h^{cs}(0,\omega)=0,\ \text{and}\ D_y h(0,\omega)=0.
        \end{equation*}
        \item[(ii)] For any $x\in \bR^d$ and any $\omega\in \Omega$, if
        \begin{equation*}
            \varphi(t,\omega,x) \to 0,\quad\text{as }t\to+\infty,
        \end{equation*}
        and
        \begin{equation}\label{eq:local_sequence}
            \|\varphi(t,\omega,x)\| < \rho(\theta^t \omega),\quad\forall~t\in\bN,
        \end{equation}
        then
        \begin{equation*}
            x\in W^{cs}(\omega) = \left\{ y + h^{cs}(y,\omega) : y\in E_{cs}(\omega) \right\}.
        \end{equation*}
    \end{itemize}
\end{theorem}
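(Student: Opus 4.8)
The plan is to recover Theorem~\ref{thm:SMT} by the Lyapunov--Perron construction in the non-uniform (random) hyperbolic setting, following \cite{arnold1998random, guo2016smooth, li2005sternberg}; here is the sketch. First I would upgrade the Oseledets splitting $\bR^d = E_u(\omega)\oplus E_{cs}(\omega)$ from Theorem~\ref{thm:MET} to a family of random (Lyapunov / tempered) inner products $\langle\cdot,\cdot\rangle_\omega$ in which this splitting is orthogonal and the time-one linear cocycle $\Phi(1,\omega)$ has a quasi-uniform exponential dichotomy: fixing a small $\epsilon>0$, on $E_u$ it expands at rate $\geq e^{\lambda_u-\epsilon}$, where $\lambda_u>0$ is the smallest positive Lyapunov exponent, and on $E_{cs}$ it has norm $\leq e^{\epsilon}$. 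The equivalence constants between $\|\cdot\|_\omega$ and the Euclidean norm are only tempered in $\omega$, but Assumption~\ref{asp:SMT} is designed precisely so that these, together with the tempered bounds $\rho_0, B, B_k$, can be absorbed into the construction without destroying the exponential gap.

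For the manifold, I would fix $\omega$ and first replace $F$ outside a tempered ball by a smooth radial cutoff $\widetilde F(1,\omega,\cdot)$ that agrees with $F(1,\omega,\cdot)$ on a smaller tempered ball and has global (in $x$) Lipschitz constant bounded by a small tempered random variable; this does not alter the dynamics near $0$ and makes the following fixed point well-defined for every $y\in E_{cs}(\omega)$. For each such $y$ I would seek a forward orbit $\{x_t\}_{t\geq 0}$ of the time-one maps $x\mapsto\Phi(1,\theta^t\omega)x+\widetilde F(1,\theta^t\omega,x)$ with center-stable component $P_{cs}(\omega)x_0 = y$ and controlled growth $\sup_{t\geq0}\|x_t\|_{\theta^t\omega}e^{-\delta t}<\infty$ for a suitable $\delta\in(\epsilon,\lambda_u-\epsilon)$. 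By the discrete variation-of-constants formula, split along the dichotomy into a forward summation on the center-stable block and a backward summation from $+\infty$ on the unstable block, such an orbit is exactly a fixed point of an operator of the schematic form
\begin{equation*}
  (\mathcal{T}x)_t = \Phi(t,\omega)y + \sum_{s=0}^{t-1}\Phi_{cs}(t,s+1;\omega)\widetilde F(1,\theta^s\omega,x_s) - \sum_{s=t}^{\infty}\Phi_{u}(t,s+1;\omega)\widetilde F(1,\theta^s\omega,x_s),
\end{equation*}
acting on the Banach space of $\delta$-exponentially weighted sequences, where $\Phi_{cs},\Phi_u$ are the dichotomy propagators. The smallness of the Lipschitz constant of $\widetilde F$ makes $\mathcal{T}$ a contraction, so the fixed point $x(y,\omega)$ exists and is unique, depends measurably on $(y,\omega)$ (fixed points of a measurable family of uniform contractions are measurable), and is $\mathcal{C}^1$ in $y$ (differentiate the fixed-point equation formally and apply the fiber-contraction theorem to the induced operator on pairs). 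Setting $h^{cs}(y,\omega):=P_u(\omega)x_0(y,\omega)$ yields $h^{cs}(0,\omega)=0$ and $D_yh^{cs}(0,\omega)=0$, because $F(1,\cdot,0)=0$ and $D_xF(1,\cdot,0)=0$ (from \eqref{eq:asp_SMT}) force the fixed point to vanish to second order at $y=0$; and $\mathrm{Lip}\,h^{cs}(\cdot,\omega)<1$ after choosing the cutoff radius small enough. Finally $\rho$ is taken to be a tempered random variable dominated by that cutoff radius.

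For part~(ii), suppose $\varphi(t,\omega,x)\to 0$ with $\|\varphi(t,\omega,x)\|<\rho(\theta^t\omega)$ for all $t\in\bN$. Then, inside the tempered ball $\varphi$ coincides with the cutoff system, so $\{\varphi(t,\omega,x)\}_{t\geq0}$ is a forward orbit with bounded weighted norm; the uniqueness clause of the contraction argument forces it to equal $x(y,\omega)$ with $y=P_{cs}(\omega)x$, and evaluating at $t=0$ gives $x=y+h^{cs}(y,\omega)\in W^{cs}(\omega)$. I expect the main obstacle to be the bookkeeping of the $\omega$-dependent tempered constants along the orbit: the dichotomy rates are clean, but the norm-equivalence factors, the cutoff radius, the bound $\rho(\theta^t\omega)$, and the perturbation bounds $B_k(\theta^t\omega)$ all fluctuate sub-exponentially under the shift $\theta$, and one must verify that the genuine exponential gap $\lambda_u$ dominates every accumulated sub-exponential factor so that the series defining $\mathcal{T}$ converge and the self-map and contraction estimates close. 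The center directions ($\lambda_i=0$) are exactly what eliminates a spectral gap on the stable side and forces the $\epsilon$-slack; they are also why the conclusion is only the necessary condition $x\in W^{cs}(\omega)$, the center-stable manifold being in general non-unique, so no converse is attempted.
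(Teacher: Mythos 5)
The paper does not prove Theorem~\ref{thm:SMT} at all: it is imported verbatim (in weakened form) from the cited literature, with Assumption~\ref{asp:SMT} arranged to match the hypotheses used there (see the remark pointing to \cite[Lemma 7.5.11]{arnold1998random} and the proofs in \cite{guo2016smooth, li2005sternberg}), so there is no in-paper argument to compare against. Your Lyapunov--Perron sketch --- tempered Lyapunov norms adapted to the Oseledets splitting, a tempered cutoff of $F$, a contraction on $\delta$-weighted sequence spaces with $\delta$ strictly between the center rate and the smallest positive exponent, fiber contraction for $\mathcal{C}^1$ regularity, and uniqueness of the fixed point to deduce part~(ii) for orbits trapped in the tempered ball \eqref{eq:local_sequence} --- is essentially the standard construction carried out in those references, and its outline is sound; the genuinely delicate step you correctly flag (absorbing the tempered norm-equivalence, cutoff, and perturbation factors into the exponential gap) is exactly where the cited proofs spend their effort, so nothing further needs to be added here beyond deferring to them.
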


The center-stable manifold theorem plays a fundamental role in our analysis. There are richer and stronger results on and related to center-stable manifolds in ~\cite{li2005sternberg, guo2016smooth}, and in Theorem~\ref{thm:SMT}, we only state the results that are necessary in our proofs. In particular, Theorem~\ref{thm:SMT} states that there exists a random manifold $W^{cs}(\omega)$, tangent to the center-stable Oseledets subspace $E_{cs}(\omega)$ and defined as the graph of $h^{cs}(\omega)$, with $\textrm{dim}\, W^{cs}(\omega) = \textrm{dim}\, E^{cs}(\omega)$, such that a trajectory converging to $x^*=0$ must start at a point in $W^{cs}(\omega)$, provided that it always stays in a tempered ball in the sense of \eqref{eq:local_sequence}.

\section{Proof sketch and discussion}\label{sec:proof_sketch}

We present the main proof outlines and ideas in this section, with all technical lemmas and propositions deferred to Section~\ref{sec:technical_lems}. We also make some technical discussion and comparisons with ~\cite{chen2023global}.

\subsection{Setup of the random dynamical system}
We rigorously define the random dynamical system associated with Algorithm~\ref{alg:RCGD}. In this setup, the system is reversible and has two-sided discrete time $t\in\bZ$, which is because we cannot find a standard version of the center-stable manifold theorem for one-sided time $t\in\bN$ in the existing literature.
\begin{itemize}[wide]
    \item{\emph{Probability space}.} For each $t \in \bZ$, denote  $(\Omega_t,\Sigma_t,\bP_t)$ the usual probability space for the distribution $\mathcal{U}_{\{1,2,\dots,n\}}$, where $\mathcal{U}_{\{1,2,\dots,n\}}$ are the uniform distributions on the set $\{1,2,\dots,n\}$. As mentioned in the introduction, Algorithm~\ref{alg:RCGD} is naturally equipped with $(\Omega,\mathcal{F},\bP)$ that is the product space of $(\Omega_t,\Sigma_t,\bP_t)$ for $t\in\bN$. We extended the probability by including the negative times, i.e., we consider $(\Omega^e,\mathcal{F}^e,\bP^e)$ that is the product space of $(\Omega_t,\Sigma_t,\bP_t)$ for $t\in\bZ$. It is clear that for every $A\in\mathcal{F}$,
    \begin{equation}\label{eq:extended_prob}
        \bP(A) = \bP^e\left(\left(\prod_{t\in\bZ_{<0}} \Omega_t \right) \times A \right).
    \end{equation}
    It can be seen that every sample in $\Omega^e$ is a sequence of indices $\omega^e = (\cdots, i_{-2},i_{-1},i_0,i_1,i_2,\cdots)$ and we denote $\pi_t$ as the projection map from $(\Omega^e,\mathcal{F}^e,\bP^e)$ to $(\Omega_t,\Sigma_t,\bP_t)$ that sends $\omega$ to $i_t$, for every $t\in\bZ$.
    
    \item{\emph{Metric dynamical system}.}
    The metric dynamical system on $\Omega^e$ is $\theta(t) = \theta^t$ for $t\in\bZ$, where $\theta:\Omega^e\to\Omega^e$ is the shifting operator defined via
    \begin{equation*}
        \pi_t(\theta\omega^e) = \pi_{t+1}(\omega^e),\quad \forall~\omega^e\in\Omega^e,\ t\in\bZ,
    \end{equation*}
    which essentially shifts every element in $\omega^e$ leftward for one position.
    It is clear that $\theta$ is measurable and $\bP^e$-preserving.
    
    \item{\emph{Random dynamical system}.}
    For any $\omega^e\in\Omega^e$, we define a (nonlinear) map on $\bR^d$ via 
    \begin{equation*}
        \begin{split}
            \phi(\omega^e):\bR^d&\rightarrow\quad\quad\quad \bR^d,\\
            x\ &\mapsto x-\alpha e_i e_i^{\top} \nabla f(x),
        \end{split}
    \end{equation*}
    where $i=\pi_0(\omega^e)$ is the index in $\omega^e$ at the time $t=0$. It can be seen that $\phi(\omega^e)$ implements one iteration of Algorithm~\ref{alg:RCGD} with the sampled index being $i=\pi_0(\omega^e)$. Then we define $\varphi(t,\omega^e):\bR^d\to\bR^d$ via 
    \begin{equation*}
        \varphi(t, \omega^e) = \begin{cases}
            \phi(\theta^{t-1}\omega^e) \circ \cdots \circ \phi(\theta\omega^e)\circ \phi(\omega^e), &\text{if } t>0, \\
            Id, & \text{if }t =0,\\
            \phi(\theta^{t}\omega^e)^{-1}\circ \cdots\circ \phi(\theta^{-2}\omega^e)^{-1}\circ \phi(\theta^{-1}\omega^e)^{-1}, &\text{if } t<0,
        \end{cases} 
    \end{equation*}
    which satisfies the cocycle property \eqref{cocycle}. Here, $\phi(\omega^e):\bR^d\to\bR^d$ is bijective and is hence invertible if $\alpha<1/M$, where $M$ is the constant as in Assumption~\ref{Assump:Hess_bdd}. We will rigorously characterize the invertibility of $\phi(\omega^e)$ in Section~\ref{sec:invertible_phi}. Thus, $\varphi(t,\omega^e)$ defines a random dynamical system on $X=\bR^d$ over $\{\theta^t\}_{t\in\bZ}$, and one can see that $\varphi(t,\omega^e)$ essentially implements the first $t$ iterations of Algorithm~\ref{alg:RCGD} for $t\geq 0$.
\end{itemize}

\subsection{Restatement of Theorem~\ref{thm:main}} One can restate Theorem~\ref{thm:main} with the extended probability space $(\Omega^e,\mathcal{F}^e,\bP^e)$. In particular, denote $\Theta^e = \bR^d\times \Omega^e$ that is equipped with a product measure 
\begin{equation*}
    \mu^e = \textsc{Leb} \times \bP^e,
\end{equation*}
and then define
\begin{equation*}
    \Theta^e(x^*) = \left\{(x_0,\omega^e)\in\Theta^e : \lim_{t\to + \infty}\varphi(t,\omega^e,x_0) = x^*\right\},
\end{equation*}
for each $x^*\in\crits$, and
\begin{equation*}
    \Theta^e(\crits) = \bigcup_{x^*\in \crits} \Theta^e(x^*).
\end{equation*}
One can thus see from \eqref{eq:extended_prob} that Theorem~\ref{thm:main} can be equivalently restated as follows.

\begin{theorem}[Restatement of Theorem~\ref{thm:main}]\label{thm:main_restate}
    Suppose that Assumptions~\ref{Assump:Hess_bdd} and ~\ref{Assump:non_degenerate} hold and that $0 < \alpha < 1/M$. It holds that
    \begin{equation*}
        \mu^e(\Theta^e(\crits)) = 0.
    \end{equation*}
\end{theorem}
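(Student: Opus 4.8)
The plan is to fix a strict saddle point $x^*\in\crits$ and prove that $\mu^e(\Theta^e(x^*))=0$. Since Assumption~\ref{Assump:non_degenerate} makes $\nabla f$ a local diffeomorphism near every $x^*\in\crits$, each such $x^*$ is isolated, so $\crits$ is at most countable and $\Theta^e(\crits)=\bigcup_{x^*\in\crits}\Theta^e(x^*)$ is then a countable union of $\mu^e$-null sets; the claim follows. Translating coordinates, assume $x^*=0$. Because $\nabla f(0)=0$ we have $\phi(\omega^e)0=0$ for every $\omega^e$, so $0$ is an equilibrium of $\varphi$, with linearized one-step maps $A(\omega^e)=D\phi(\omega^e)(0)=I-\alpha e_ie_i^{\top}H$, where $H:=\nabla^2 f(0)$ and $i=\pi_0(\omega^e)$; write $\varphi(t,\omega^e,x)=\Phi(t,\omega^e)x+F(t,\omega^e,x)$ accordingly.

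First I would check Assumption~\ref{asp:SMT} and invoke the center--stable manifold theorem. The random--dynamical bookkeeping is unusually mild: $A(\cdot)$ takes only the $d$ values $I-\alpha e_ie_i^{\top}H$, each in $\mathrm{GL}(d,\bR)$ because $0<\alpha<1/M$ forces its eigenvalues---namely $1$ with multiplicity $d-1$ and $1-\alpha H_{ii}$---to be nonzero, so $\log\|A(\cdot)\|$ and $\log\|A(\cdot)^{-1}\|$ are bounded and \eqref{pre:MET} is automatic. The map $\phi(\omega^e)$ is a $\mathcal C^1$ diffeomorphism of $\bR^d$ (global bijectivity under $\alpha<1/M$ being the content of Section~\ref{sec:invertible_phi}), so $\varphi$ is a $\mathcal C^1$ random dynamical system. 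Writing $F(1,\omega^e,x)=-\alpha e_ie_i^{\top}(\nabla f(x)-Hx)$ and using the local Lipschitz continuity of $\nabla^2 f$ near $x^*$ from Assumption~\ref{Assump:Hess_bdd}, one obtains $\|D_xF(1,\omega^e,x)\|\le\alpha L\|x\|$ on a \emph{fixed} ball $U=\{\|x\|<\rho_0\}\subseteq N(x^*)$, together with constant bounds on $\|F\|$ and $\|D_xF\|$ over $U$; since $\rho_0$ and the $B$'s are deterministic they are trivially tempered. Theorem~\ref{thm:SMT} thus yields a tempered $\rho\colon\Omega^e\to(0,\infty)$ and a $\mathcal C^1$ graph $W^{cs}(\omega^e)=\{y+h^{cs}(y,\omega^e):y\in E_{cs}(\omega^e)\}$ with $\dim W^{cs}(\omega^e)=\dim E_{cs}(\omega^e)$.

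Two facts then finish the proof, and the first is the main obstacle. (i) One must show that a.s.\ $E_u(\omega^e)\ne\{0\}$, equivalently that the top Lyapunov exponent $\lambda_1$ of $\Phi$ is strictly positive, so that $\dim W^{cs}(\omega^e)\le d-1$ a.s. Here the strict-saddle hypothesis $\lambda_{\min}(H)<0$ enters through the identity
\begin{equation*}
(A_ix)^{\top}H(A_ix)=x^{\top}Hx-\alpha\,(2-\alpha H_{ii})\,((Hx)_i)^2\ \le\ x^{\top}Hx,
\end{equation*}
valid for each $i$ because $\alpha H_{ii}\le\alpha M<1<2$; consequently the nonempty cone $\mathcal K=\{x:x^{\top}Hx<0\}$ is forward-invariant under every $A_i$, while on $\mathcal K$ one has the averaged bound $\tfrac1d\sum_i\|A_ix\|^2=\|x\|^2-\tfrac{2\alpha}{d}x^{\top}Hx+\tfrac{\alpha^2}{d}\|Hx\|^2\ge\bigl(1+\tfrac{\alpha^2}{d}\sigma_{\min}(H)^2\bigr)\|x\|^2$, using the non-degeneracy of $H$. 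Upgrading this to a uniform \emph{logarithmic} expansion on a compact part of $\mathcal K\cap S^{d-1}$ and then averaging along the trajectory---using ergodicity of the Bernoulli shift $\theta$, so that $\lambda_1$ is an a.s.\ constant---gives $\lambda_1>0$; making the logarithmic expansion estimate uniform over the whole range $0<\alpha<1/M$ is the delicate point, and is the step I expect to cost the most work. (ii) The non-degeneracy of $H$ also precludes zero Lyapunov exponents of $\Phi$, so $E_{cs}(\omega^e)=E_s(\omega^e)$ is purely stable; hence any trajectory converging to $0$, once it has entered the fixed ball $U$ where $f$ is well approximated by its quadratic part, in fact converges geometrically, and therefore eventually lies below the at-most-sub-exponentially-decaying radius $\rho(\theta^t\omega^e)$. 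Thus for any $x_0$ with $x_t:=\varphi(t,\omega^e,x_0)\to 0$ there is $T=T(x_0,\omega^e)$ with $\|x_{T+s}\|<\rho(\theta^{T+s}\omega^e)$ for all $s\ge0$, and Theorem~\ref{thm:SMT}(ii), applied to the point $x_T$ and the shifted sample $\theta^T\omega^e$, puts $x_T\in W^{cs}(\theta^T\omega^e)$.

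Finally I would assemble the measure estimate. Fact (ii) shows that, for a.e.\ $\omega^e$,
\begin{equation*}
\bigl\{x_0\in\bR^d:\varphi(t,\omega^e,x_0)\to 0\bigr\}\ \subseteq\ \bigcup_{T\ge0}\bigl(\varphi(T,\omega^e,\cdot)\bigr)^{-1}\bigl(W^{cs}(\theta^T\omega^e)\bigr),
\end{equation*}
a countable union of preimages of $W^{cs}$ under the $\mathcal C^1$ diffeomorphisms $\varphi(T,\omega^e,\cdot)$; by fact (i) each $W^{cs}(\theta^T\omega^e)$ is a $\mathcal C^1$ submanifold of dimension $\le d-1$, so each preimage---and hence the whole section---is Lebesgue-null. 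Fubini's theorem for $\mu^e=\textsc{Leb}\times\bP^e$ then gives $\mu^e(\Theta^e(x^*))=0$, and summing over the countable set $\crits$ yields $\mu^e(\Theta^e(\crits))=0$, which is Theorem~\ref{thm:main_restate}.
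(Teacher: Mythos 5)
Your overall architecture is the same as the paper's (reduce to a single saddle by countability of $\crits$, set up the RDS, validate Assumption~\ref{asp:SMT}, show the top Lyapunov exponent is positive, show convergent trajectories eventually stay inside the tempered ball, then apply Theorem~\ref{thm:SMT} and Fubini), and the final measure-zero assembly and the validation of Assumption~\ref{asp:SMT} are fine. The genuine gap is your fact (ii). You assert that non-degeneracy of $H=\nabla^2 f(x^*)$ ``precludes zero Lyapunov exponents of $\Phi$,'' so that $E_{cs}(\omega^e)=E_s(\omega^e)$ and any trajectory converging to $0$ converges geometrically. The Lyapunov exponents of the random product $\prod_t\bigl(I-\alpha e_{i_t}e_{i_t}^{\top}H\bigr)$ are ergodic averages, not eigenvalues of $H$ or of any single $A_i$; nothing in Assumptions~\ref{Assump:Hess_bdd}--\ref{Assump:non_degenerate}, for an arbitrary fixed $0<\alpha<1/M$, rules out an exponent equal to zero, and the paper deliberately works with the \emph{center-stable} rather than stable manifold precisely to allow for this. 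Moreover, even if $E_{cs}$ were purely stable, deducing geometric convergence of a nonlinear trajectory from the linear spectrum would itself require the random invariant-manifold characterization, whose hypothesis \eqref{eq:local_sequence} (staying below the tempered radius $\rho(\theta^t\omega^e)$, which may decay subexponentially) is exactly what you are trying to establish; the reasoning is circular. The paper closes this hole by a purely nonlinear argument (Proposition~\ref{prop:converexpo}): monotone decay of $f(x_t)$, a martingale law of large numbers showing a $1/d$ fraction of steps pick a coordinate carrying a $1/\sqrt d$ fraction of $\|\nabla f(x_t)\|$ (the set $\Theta_1$), an argument that trajectories visit the thin set $S$ near the zero level set only finitely often a.s.\ (the set $\Theta_2$), and the local estimate $f(x)\ge p\|x\|^2$ off $S$ (Lemma~\ref{lem:convexesti}). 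Your proposal contains no substitute for this step, and it is the technical core of the result.

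There is also an acknowledged gap in your fact (i): the cone invariance and the identity for $(A_ix)^{\top}H(A_ix)$ are correct, but the averaged bound on $\tfrac1d\sum_i\|A_ix\|^2$ controls a second moment, while $\lambda_1$ is an almost-sure limit of $\tfrac1t\log\|\Phi(t,\omega^e)x\|$; since $\bE\log\le\tfrac12\log\bE\|\cdot\|^2$ goes the wrong way, second-moment expansion does not give $\lambda_1>0$, and you leave the ``logarithmic upgrade'' open. This can be repaired pathwise (e.g.\ $q(x)=-x^{\top}Hx$ is nondecreasing along every orbit in the cone and gains a fixed factor whenever the sampled coordinate carries a $1/d$ fraction of $\|Hx\|^2$, which a.s.\ happens with asymptotic frequency at least $1/d$), and the paper itself defers this proposition to \cite{chen2023global}; still, as written your argument does not establish Proposition~\ref{prop:positive_Lyapunov}, so together with the failure of (ii) the proposal does not yet prove the theorem.
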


Theorem~\ref{thm:main_restate} essentially states that for $\mu^e$-almost surely $(x_0,\omega^e)$ will not converge to a strict saddle point $x^*\in\crits$. To prove Theorem~\ref{thm:main_restate}, one only needs to investigate the measure of $\Theta^e(x^*)$ for every $x^*\in\crits$, which is stated in the following theorem.

\begin{theorem}\label{thm:single_xstar}
    Suppose that Assumptions~\ref{Assump:Hess_bdd} holds and that $0 < \alpha < 1/M$. For any $x^*\in\crits$, if $x^*$ is a non-degenerate critical point of $f$, i.e., all eigenvalues of $\nabla^2 f(x^*)$ are nonzero, then
    \begin{equation*}
        \mu^e(\Theta^e(x^*)) = 0.
    \end{equation*}
\end{theorem}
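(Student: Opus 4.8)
The plan is to reduce the problem to a local statement near $x^*$ and then apply the center-stable manifold theorem (Theorem~\ref{thm:SMT}) to the random dynamical system centered at $x^*$. First I would translate coordinates so that $x^* = 0$ becomes the equilibrium; one must check that $\varphi(t,\omega^e,x^*)=x^*$ for all $t$, which holds because $\nabla f(x^*)=0$ forces $\phi(\omega^e)$ to fix $x^*$ for every $\omega^e$. Next I would compute the linearization $\Phi(t,\omega^e) = D_x\varphi(t,\omega^e,0)$: each factor $D_x\phi(\theta^k\omega^e)(0) = I - \alpha e_{i_k}e_{i_k}^\top \nabla^2 f(x^*)$, so $\Phi$ is a product of the i.i.d.\ random matrices $A_k = I - \alpha e_{i_k}e_{i_k}^\top H$ with $H=\nabla^2 f(x^*)$. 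The key structural observations are: (a) since $H$ has a negative eigenvalue and $0<\alpha<1/M$, the expected matrix $\mathbb{E}[A_k] = I - (\alpha/d) H$ has an eigenvalue strictly greater than $1$, which should force at least one positive Lyapunov exponent, so $\dim E_{cs}(\omega^e) < d$; (b) because $\|H\|\le M$ and $\alpha<1/M$, each $A_k$ is invertible with $\|A_k\|, \|A_k^{-1}\|$ uniformly bounded, so the integrability hypothesis \eqref{pre:MET} of the multiplicative ergodic theorem is satisfied trivially. Establishing (a) rigorously — that $\mathbb{E}[\log\|A_k\cdots A_0 v\|/(t)] > 0$ for $v$ in some direction, hence $E_u(\omega^e)\neq\{0\}$ — is where I expect the real work to lie; one natural route is Furstenberg-type positivity of the top Lyapunov exponent, or a direct argument using that $\prod \mathbb{E}[A_k]$ has spectral radius $>1$ together with a second-moment / Jensen-type comparison. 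This is the main obstacle.

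Once $E_u(\omega^e)$ is nontrivial on a full-measure set, I would verify Assumption~\ref{asp:SMT}: part (i) follows from $f\in\mathcal{C}^2$ and the bound on $\Phi$ just discussed; part (ii) requires controlling the nonlinear remainder $F(1,\omega^e,x) = \phi(\theta^0\omega^e)(x) - x + \alpha e_i e_i^\top H x$. Here $D_x F(1,\omega^e,x) = -\alpha e_i e_i^\top(\nabla^2 f(x^*+x) - \nabla^2 f(x^*))$, which by the local Lipschitz continuity of $\nabla^2 f$ from Assumption~\ref{Assump:Hess_bdd} is bounded by $\alpha L\|x\|$ on a fixed (deterministic) neighborhood $N(x^*)$; taking $\rho_0(\omega^e)$ to be the constant radius of that neighborhood gives a tempered-from-below (indeed constant) variable, and $B(\omega^e) = \alpha L$, $B_k(\omega^e)$ constants are tempered from above. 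So Assumption~\ref{asp:SMT} holds and Theorem~\ref{thm:SMT} yields a tempered $\rho:\Omega^e\to(0,\infty)$ and a center-stable manifold $W^{cs}(\omega^e)$ of dimension $\dim E_{cs}(\omega^e) \le d-1$.

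The final step is the measure-theoretic conclusion. Fix $\omega^e$ in the full-measure set. If $(x_0,\omega^e)\in\Theta^e(x^*)$, i.e.\ $\varphi(t,\omega^e,x_0)\to x^*$, then since $\rho$ is tempered from below, $\rho(\theta^t\omega^e)$ decays at most subexponentially, while $\|\varphi(t,\omega^e,x_0)-x^*\|\to 0$; hence for $t$ large the local-trajectory condition \eqref{eq:local_sequence} holds, and by the cocycle property applied to the shifted sample $\theta^{t_0}\omega^e$ and the point $\varphi(t_0,\omega^e,x_0)$ (for a suitable $t_0$), Theorem~\ref{thm:SMT} forces $\varphi(t_0,\omega^e,x_0) - x^* \in W^{cs}(\theta^{t_0}\omega^e)$. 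Thus $x_0$ lies in $\bigcup_{t_0\in\bN}\varphi(t_0,\omega^e)^{-1}\bigl(x^* + W^{cs}(\theta^{t_0}\omega^e)\bigr)$, a countable union of $\mathcal{C}^1$ images of manifolds of dimension $\le d-1$, each of which is Lebesgue-null in $\bR^d$ (using that $\varphi(t_0,\omega^e)$ is a $\mathcal{C}^1$ diffeomorphism, so it maps null sets to null sets). Therefore the $\omega^e$-section $\{x_0 : (x_0,\omega^e)\in\Theta^e(x^*)\}$ is Lebesgue-null for every $\omega^e$ in a full-$\bP^e$-measure set, and by Fubini's theorem $\mu^e(\Theta^e(x^*)) = 0$. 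I would also need a brief measurability check that $\Theta^e(x^*)$ is $\mu^e$-measurable (it is, as $\varphi$ is jointly measurable and the limit condition is a countable combination of measurable sets), so that Fubini applies.
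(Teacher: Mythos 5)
Your overall strategy matches the paper's: shift $x^*$ to the origin, verify Assumption~\ref{asp:SMT} via the local Lipschitz continuity of $\nabla^2 f$, get a positive top Lyapunov exponent so that $\dim W^{cs}\le d-1$, pull the center-stable manifolds back along the iteration and conclude by Fubini. However, there is a genuine gap at the step where you invoke Theorem~\ref{thm:SMT}. You assert that, because $\rho$ is tempered and $\varphi(t,\omega^e,x_0)\to x^*$, the localization condition \eqref{eq:local_sequence} holds for all large $t$. Temperedness only guarantees that $\rho(\theta^t\omega^e)$ decays \emph{subexponentially}; it does not prevent $\rho(\theta^t\omega^e)\to 0$ (e.g.\ like $e^{-\sqrt{t}}$). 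A trajectory that converges to $x^*$ at a slower-than-exponential rate (say like $1/t$) could then violate $\norm{\varphi(t,\omega^e,x_0)-x^*}<\rho(\theta^t\omega^e)$ for infinitely many $t$, and Theorem~\ref{thm:SMT} would give you nothing about such a trajectory. Closing exactly this hole is the main technical content of the paper: Proposition~\ref{prop:converexpo} shows that, off a $\mu$-null set, convergence to $x^*$ is automatically \emph{exponential}, which is what makes $\frac1t\log\norm{x_t}<0$ eventually beat $\frac1t\log\rho(\theta^t\omega^e)\to 0$ and validates \eqref{eq:local_sequence}. Its proof is nontrivial (Section~\ref{sec:pf_converexpo}): a martingale strong-law argument showing the sampled coordinate captures at least a $1/\sqrt{d}$ fraction of the gradient with asymptotic frequency $\ge 1/d$, the monotone decrease of $f$ along the iterates, and local geometric comparisons of $f$ with its quadratic model near the saddle (Lemmas~\ref{lem:area2}--\ref{lem:U2H}) to convert exponential decay of $f(x_t)$ into exponential decay of $\norm{x_t}$. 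Your proposal contains no substitute for this ingredient, so the application of the center-stable manifold theorem is unjustified as written.

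A secondary issue: you correctly identify positivity of the top Lyapunov exponent as a nontrivial point, but the route you sketch (spectral radius of $\bE[A_k]=I-(\alpha/d)H$ exceeding $1$ plus a Jensen-type comparison) does not work as stated, since Jensen gives $\bE\log\norm{\cdot}\le\log\bE\norm{\cdot}$, i.e.\ the inequality points the wrong way for a lower bound on the Lyapunov exponent. The paper handles this by invoking the argument of Proposition~3.1 of \cite{chen2023global} (Proposition~\ref{prop:positive_Lyapunov}), so this is fixable by citation, but it is not resolved by the comparison you propose.
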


In particular, the proof of Theorem~\ref{thm:main_restate} is straightforward based on Theorem~\ref{thm:single_xstar}.

\begin{proof}[Proof of Theorem~\ref{thm:main_restate}]
    It follows from Assumption~\ref{Assump:non_degenerate} that every $x^*\in \crits$ is an isolated critical point of $f$, which is because that $\nabla f(x)=\nabla^2 f(x^*)(x-x^*)+o(\norm{x-x^*}) \neq 0$ for any $x\neq x^*$ in a small neighborhood of $x^*$. This implies that $\crits$ is countable. Then according to Theorem~\ref{thm:single_xstar}, we can obtain that
    \begin{equation*}
        \mu^e(\Theta^e(\crits)) = \sum_{x^*\in\crits} \mu^e(\Theta^e(x^*)) = 0,
    \end{equation*}
    which completes the proof of Theorem~\ref{thm:main_restate}.
\end{proof}

Corollary~\ref{cor:global_conv} is also an immediate consequence.

\begin{proof}[Proof of Corollary~\ref{cor:global_conv}]
    Using the same arguments as in ~\cite[Proofs of Proposition 4.11 and Proposition 4.12]{guo2016smooth}, it can be shown that $x_t(\omega)$ converges to a point in $\crit$ as $t\to+\infty$ unless $(x_0,\omega)$ is in a $\mu$-null set. One can further exclude $\Theta(\crits)$, that is also a $\mu$-null set as in Theorem~\ref{thm:main}, which guarantees that the limit is in $\crit\setminus\crits$.
\end{proof}

\subsection{Proof of Theorem~\ref{thm:single_xstar}}
In this subsection, we present the main proof outline of Theorem~\ref{thm:single_xstar}, with proofs of some technical lemmas and propositions deferred to Section~\ref{sec:technical_lems}. We consider any $x^*\in\crits$ and assume that $x^*=0$ without loss of generality. The linearization of $\varphi(t,\omega^e)$ at $x^*=0$ is
\begin{equation*}
\Phi^H(t, \omega^e) = \begin{cases}
    A^H(\theta^{t-1}\omega^e) \cdots A^H(\theta\omega^e) A^H(\omega^e), &\text{if } t>0, \\
    I, & \text{if }t =0,\\
    A^H(\theta^{t}\omega^e)^{-1} \cdots A^H(\theta^{-2}\omega^e)^{-1} A^H(\theta^{-1}\omega^e)^{-1}, &\text{if } t<0,
\end{cases} 
\end{equation*}
where 
\begin{equation*}
    A^H(\omega^e) = I-\alpha e_i e_i^\top H,\quad H = \nabla^2 f(x^*).
\end{equation*}

A crucial step in the proof of Theorem~\ref{thm:single_xstar} is to apply the center-stable manifold theorem, i.e., Theorem~\ref{thm:SMT}, for which one needs to validate Assumption~\ref{asp:SMT}. We include the detailed validation with Assumption~\ref{Assump:Hess_bdd} and $\alpha < 1/M$ in Section~\ref{sec:validate_asp_SMT}. Moreover, we need the following two propositions.

\begin{proposition}\label{prop:positive_Lyapunov}
    Let $H=\nabla f(x^*)$ have a negative eigenvalue and $0 <\alpha < 1/\max_{1\leq i\leq d}|H_{ii}|$, then the largest Lyapunov exponent of $\Phi^H(t,\omega^e)$ is positive.
\end{proposition}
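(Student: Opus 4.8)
The plan is to follow one carefully chosen trajectory of the linearized dynamics, exhibit a scalar potential that increases along it at a uniform logarithmic rate, and then convert this into a lower bound on the top Lyapunov exponent $\lambda_1$. Throughout write $H:=\nabla^2 f(x^*)$ for the (symmetric) Hessian, which by hypothesis satisfies $\lambda_{\min}(H)<0$; fix an orthonormal eigenbasis $v_1,\dots,v_d$ with eigenvalues $\mu_1,\dots,\mu_d$, and set $\gamma_{\min}:=\min\{|\mu_k|:\mu_k<0\}>0$ and $\gamma:=-\lambda_{\min}(H)>0$. Recall $A^H_i:=I-\alpha e_ie_i\trans H$; under the product/shift structure the sampled indices $i_t:=\pi_t(\omega^e)$ are i.i.d.\ uniform on $\{1,\dots,d\}$, so $\theta$ is an ergodic (Bernoulli) shift and $\lambda_1$ is an almost sure constant. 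Let $\bE$ denote expectation with respect to $\bP^e$. I will use that $\frac1t\log\norm{\Phi^H(t,\omega^e)}\to\lambda_1$ almost surely, which follows from Theorem~\ref{thm:MET} since $\norm{\Phi^H(t,\omega^e)}$ is the square root of the top eigenvalue of $\Phi^H(t,\omega^e)\trans\Phi^H(t,\omega^e)$, and $e^{\lambda_1}$ is the top eigenvalue of $\Lambda(\omega^e)$.

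First I would introduce the potential $q(x):=-x\trans Hx$ and the open cone $C^-:=\{x:q(x)>0\}$, which is nonempty because $H$ has a negative eigenvalue. A direct computation gives, for every $i$,
\[ q(A^H_ix)\ =\ q(x)+\alpha\,(Hx)_i^2\,(2-\alpha H_{ii}), \]
and since $0<\alpha<1/\max_j|H_{jj}|$ one has $2-\alpha H_{ii}>1$, so $q$ is nondecreasing along the dynamics and $C^-$ is forward invariant. Fixing an initial point $x_0\in C^-$ (for definiteness, a unit eigenvector of $H$ for $\lambda_{\min}(H)$) and writing $x_t:=\Phi^H(t,\omega^e)x_0$, one has $q(x_t)\ge q(x_0)>0$ and $x_t\in C^-$ for all $t$. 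The heart of the argument is a one-step lower bound on $\bE[\log q(x_{t+1})\mid\mathcal{G}_t]$, with $\mathcal{G}_t:=\sigma(i_0,\dots,i_{t-1})$: writing $\log q(x_{t+1})-\log q(x_t)=\log\bigl(1+\alpha(Hx_t)_{i_t}^2(2-\alpha H_{i_ti_t})/q(x_t)\bigr)\ge0$ and averaging over the uniform $i_t$ (independent of the $\mathcal{G}_t$-measurable $x_t$) gives a mean of $d$ nonnegative terms, hence at least its largest one. Keeping only the index $j$ with $(Hx_t)_j^2=\max_i(Hx_t)_i^2\ge\frac1d\norm{Hx_t}^2$, using $2-\alpha H_{jj}>1$ and the elementary inequality $\norm{Hx}^2\ge\gamma_{\min}\,q(x)$ valid on $C^-$ (expand $x=\sum_kc_kv_k$: $q(x)\le\sum_{\mu_k<0}|\mu_k|c_k^2$ while $\norm{Hx}^2\ge\sum_{\mu_k<0}\mu_k^2c_k^2\ge\gamma_{\min}\sum_{\mu_k<0}|\mu_k|c_k^2$), one arrives at the trajectory-independent bound
\[ \bE\bigl[\log q(x_{t+1})-\log q(x_t)\mid\mathcal{G}_t\bigr]\ \ge\ \tfrac1d\log\!\bigl(1+\tfrac{\alpha\gamma_{\min}}{d}\bigr)\ =:\ \kappa\ >\ 0 . \]
Iterating and taking expectations yields $\bE[\log q(x_t)]\ge\log q(x_0)+\kappa t$; combined with $q(x)\le\gamma\norm{x}^2$ this gives $\bE[\log\norm{x_t}]\ge\tfrac{\kappa}{2}t-C_0$ for a constant $C_0=C_0(x_0,H)$.

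To conclude, I would combine $\norm{x_t}\le\norm{\Phi^H(t,\omega^e)}\,\norm{x_0}$ with $\frac1t\log\norm{\Phi^H(t,\omega^e)}\to\lambda_1$ to get $\limsup_t\frac1t\log\norm{x_t}\le\lambda_1$ almost surely; since also $\frac1t\log\norm{x_t}\le\log K$ with $K:=\max_i\norm{A^H_i}$, the reverse Fatou lemma gives $\limsup_t\frac1t\bE[\log\norm{x_t}]\le\bE\bigl[\limsup_t\frac1t\log\norm{x_t}\bigr]\le\lambda_1$, whereas the lower bound above forces $\liminf_t\frac1t\bE[\log\norm{x_t}]\ge\kappa/2$; hence $\lambda_1\ge\kappa/2>0$. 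I expect the only real obstacle to be this one-step estimate. It is immediate that $q$ grows geometrically \emph{in expectation} — indeed $\bE[q(x_{t+1})\mid\mathcal{G}_t]\ge(1+\alpha\gamma_{\min}/d)\,q(x_t)$ straight from the update formula — but the Lyapunov exponent is governed by $\bE[\log q(x_t)]$, not $\bE[q(x_t)]$, and the log-increments $\log q(x_{t+1})-\log q(x_t)$ are \emph{not} bounded above (they blow up as $x_t$ nears the boundary of $C^-$, where $q(x_t)/\norm{x_t}^2\to0$), so a naive application of Jensen's inequality points the wrong way. The point that rescues the argument is that monotonicity of $q$ makes every summand of the conditional-expectation average nonnegative, so one may discard all but the single most favorable coordinate and still keep a uniform positive bound. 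The remaining ingredients — identifying $\lambda_1$ with a Lyapunov exponent through Theorem~\ref{thm:MET}, ergodicity of the shift, and the deterministic bound $\norm{\Phi^H(t,\omega^e)}\le K^t$ that legitimizes the reverse Fatou step — are routine.
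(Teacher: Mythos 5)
Your proposal is correct. The paper gives no in-text proof here, deferring to \cite{chen2023global}*{Proposition 3.1}, and the mechanism there is the same one you found: the potential $q(x)=-x^\top Hx$, the one-step identity $q(A^H_i x)=q(x)+\alpha(2-\alpha H_{ii})(Hx)_i^2$ with $2-\alpha H_{ii}>1$ under $\alpha<1/\max_i|H_{ii}|$, the fact that some coordinate satisfies $(Hx)_i^2\geq\norm{Hx}^2/d$, and the cone estimate $\norm{Hx}^2\geq\gamma_{\min}\,q(x)$ on $\{q>0\}$. The only real divergence is the final stochastic bookkeeping: the cited argument (in the spirit of Lemma~\ref{lem:area1} of this paper) converts the conditional-probability-$1/d$ ``good steps'' into almost-sure linear frequency via a strong-law/martingale argument, obtaining pathwise exponential growth of $q(x_t)$ and hence $\liminf_t\frac1t\log\norm{x_t}>0$ a.s., which forces $\lambda_1>0$ directly through \eqref{conv:Lya_exp}; you instead bound $\bE[\log q(x_t)]$ below by $\kappa t$ and transfer this to $\lambda_1$ via the deterministic bound $\norm{x_t}\leq\norm{\Phi^H(t,\omega^e)}$, reverse Fatou, and ergodicity of the shift. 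Both routes are sound; yours avoids the SLLN for dependent indicators at the price of the limsup/expectation exchange, and that last exchange could even be skipped by invoking the Furstenberg--Kesten/Kingman identity $\lambda_1=\lim_t\frac1t\bE\log\norm{\Phi^H(t,\cdot)}$, after which $\bE\log\norm{x_t}\geq\frac{\kappa}{2}t-C_0$ gives $\lambda_1\geq\kappa/2$ immediately. Your resolution of the Jensen-direction worry is also the right one: monotonicity makes every summand in the conditional expectation of the log-increment nonnegative, so discarding all but the best coordinate is legitimate and yields a uniform positive drift.
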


\begin{proposition}\label{prop:converexpo}
    Suppose that Assumption~\ref{Assump:Hess_bdd} holds and that $0<\alpha<1/M$. For any $x^*\in\crits$, if $x^*$ is a non-degenerate critical point of $f$, i.e., all eigenvalues of $\nabla^2 f(x^*)$ are nonzero, then there exists $\widetilde{\Theta} \subseteq \Theta = \bR^d\times \Omega $ with $\mu(\Theta \setminus \widetilde{\Theta}) = 0$, such that for any $(x_0,\omega)\in \widetilde{\Theta}$, if $x_t(\omega) \to x^*$ as $t\to+\infty$, then $x_t(\omega) \to x^*$ exponentially as $t\to+\infty$.
\end{proposition}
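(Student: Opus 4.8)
The plan is to prove that the \emph{function values} $f(x_t)$ converge exponentially along any trajectory with $x_t(\omega)\to x^*$, and then deduce from this that $\norm{x_t-x^*}$ converges exponentially. After translating so that $x^*=0$, write $f^*=f(0)$ and $u_t=f(x_t)-f^*$. The descent inequality $f(y)\le f(x)+\langle\nabla f(x),y-x\rangle+\tfrac M2\norm{y-x}^2$ (which follows from $\norm{\nabla^2 f}\le M$), applied with $y=x_{t+1}=x_t-\alpha\,\partial_{i_t}f(x_t)\,e_{i_t}$ and $0<\alpha<1/M$, gives $f(x_{t+1})\le f(x_t)-\tfrac{\alpha}{2}\bigl(\partial_{i_t}f(x_t)\bigr)^2$, so $\{f(x_t)\}$ is nonincreasing; hence on $\{x_t\to 0\}$ we have $u_t\downarrow 0$, in particular $u_t\ge 0$ for all $t$, and
\begin{equation*}
\norm{x_{t+1}-x_t}^2=\alpha^2\bigl(\partial_{i_t}f(x_t)\bigr)^2\le 2\alpha\,(u_t-u_{t+1})\le 2\alpha\,u_t .
\end{equation*}
Consequently, once it is known that $u_t\le C(\omega)\rho^{\,t}$ for some deterministic $\rho\in(0,1)$ and a finite random $C(\omega)$, summing $\norm{x_{t+1}-x_t}\le\sqrt{2\alpha u_t}$ over $t\ge T$ and using $x_t\to 0$ yields $\norm{x_T}=\norm{\sum_{t\ge T}(x_t-x_{t+1})}\le\sqrt{2\alpha}\sum_{t\ge T}\sqrt{u_t}$, which is exponentially small in $T$. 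So the task reduces to proving exponential decay of $u_t$ on $\{x_t\to 0\}$, outside a $\mu$-null set of $(x_0,\omega)$.

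For the decay of $u_t$ I would combine two ingredients. The first is a local Polyak--\L{}ojasiewicz inequality furnished by non-degeneracy: with $\sigma:=\sigma_{\min}(\nabla^2 f(0))>0$, the Taylor expansions $\nabla f(x)=\nabla^2 f(0)x+O(\norm{x}^2)$ and $f(x)-f^*=\tfrac12 x\trans\nabla^2 f(0)x+O(\norm{x}^3)$ (legitimate by Assumption~\ref{Assump:Hess_bdd}) give constants $\kappa,r>0$ with $\norm{\nabla f(x)}^2\ge\tfrac{\sigma^2}{4}\norm{x}^2$ and $f(x)-f^*\le C_2\norm{x}^2$ for $\norm{x}<r$, hence $\norm{\nabla f(x)}^2\ge\kappa\,(f(x)-f^*)$ for $\norm{x}<r$ (this being trivial when $f(x)-f^*<0$). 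Here it is essential that $f-f^*$ need not be sign-definite near a saddle point, which is why one bounds it from \emph{above} by $\norm{x}^2$ rather than below. The second ingredient is a stochastic one-step contraction: conditioning on $\mathcal{F}_t:=\sigma(i_0,\dots,i_{t-1})$ and using $\bE\bigl[(\partial_{i_t}f(x_t))^2\mid\mathcal{F}_t\bigr]=\tfrac1d\norm{\nabla f(x_t)}^2$, the descent estimate gives, on $\{\norm{x_t}<r\}\cap\{u_t\ge 0\}$,
\begin{equation*}
\bE\bigl[u_{t+1}\mid\mathcal{F}_t\bigr]\le u_t-\tfrac{\alpha}{2d}\norm{\nabla f(x_t)}^2\le\gamma\,u_t,\qquad \gamma:=1-\tfrac{\alpha\kappa}{2d}\in(0,1).
\end{equation*}

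To turn this conditional contraction into an almost sure exponential bound, I would, for each fixed $s\in\bN$, introduce the stopping time $\tau_s:=\inf\{t\ge s:\norm{x_t}\ge r\}$ (with respect to $(\mathcal{F}_t)$) and observe that, on $\{x_t\to 0\}$, the discounted stopped sequence $\gamma^{-(t\wedge\tau_s)}u_{t\wedge\tau_s}$, $t\ge s$, behaves as a nonnegative supermartingale; Markov's inequality together with the Borel--Cantelli lemma then gives, outside a $\bP$-null set, $u_t\le C(\omega)(\sqrt{\gamma})^{\,t}$ for all large $t$ on the event $\{x_t\to 0\}\cap\{\tau_s=\infty\}=\{x_t\to 0\}\cap\{\norm{x_t}<r\ \text{for all }t\ge s\}$. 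Since $x_t\to 0$ forces $\norm{x_t}<r$ eventually, the union over $s\in\bN$ of these events is all of $\{x_t\to 0\}$, and the corresponding countable union of exceptional $\bP$-null sets --- regarded inside $\Theta=\bR^d\times\Omega$ fibered over $x_0$, and measurable because it is built from $\limsup$s of measurable sets --- is $\mu$-null by Tonelli's theorem; its complement is the desired $\widetilde{\Theta}$. The step I expect to be the main obstacle is precisely this bookkeeping with the random time: since the monotonicity of $f(x_t)$ does \emph{not} keep $x_t$ in a neighborhood of a saddle point, the trajectory may leave and re-enter the \L{}ojasiewicz ball $\{\norm{x}<r\}$, forcing the supermartingale to be stopped at $\tau_s$ and the argument to be run separately for each deterministic $s$ before the countable union is taken. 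The remaining pieces --- the descent estimate, the \L{}ojasiewicz estimate, and the telescoping transfer from $u_t$ to $\norm{x_t}$ --- are routine, as is the measurability check needed for the Tonelli argument.
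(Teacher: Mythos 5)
Your proposal is correct in substance but takes a genuinely different route from the paper, most notably in its second half. For the decay of the function values $u_t=f(x_t)-f(x^*)$, the paper counts ``good'' coordinate draws: it defines the indicator $I_t$ in \eqref{eq:It}, uses the strong law of large numbers for martingales to show the good-step frequency is at least $1/d$ on a full-measure set $\Theta_1$ (Lemma~\ref{lem:area1}), and then iterates the per-good-step contraction \eqref{eq:decay_fxt_It=1}; your conditional contraction $\bE[u_{t+1}\mid\mathcal{F}_t]\le\gamma u_t$ plus a stopped-supermartingale/Borel--Cantelli argument is an equivalent realization of the same local P\L{} mechanism. The real divergence is in passing from exponential decay of $u_t$ to exponential decay of $\norm{x_t-x^*}$: the paper must rule out trajectories hugging the zero level set of $f$ near the saddle, which costs the construction of $S=U_1\cap U_2\cap f^{-1}([0,+\infty))$, the finitely-often-visits argument of Lemma~\ref{lem:area2} (a second null set $\Theta_2$), and the quadratic growth estimate $f(x)\ge p\norm{x}^2$ off $S$ (Lemmas~\ref{lem:convexesti}--\ref{lem:U2H}). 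Your telescoping bound $\norm{x_{t+1}-x_t}=\alpha|\partial_{i_t}f(x_t)|\le\sqrt{2\alpha(u_t-u_{t+1})}\le\sqrt{2\alpha u_t}$, summed over the tail and using $u_t\ge0$ on the convergent event, bypasses all of that geometry and needs no second exceptional set; it is a shorter and arguably cleaner argument, at the price of giving only the rate $\rho^{t/2}$ inherited from $u_t$ rather than the quadratic relation $\norm{x_t}^2\lesssim u_t$ the paper obtains.

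One wrinkle in your first stage deserves attention: as written, $\gamma^{-(t\wedge\tau_s)}u_{t\wedge\tau_s}$ is a supermartingale but not a nonnegative one on the whole sample space, since $u$ may become negative and this is revealed only after the step, so it cannot be prevented by stopping; Markov's inequality then does not apply directly, and the negative contribution at the stopping time carries the factor $\gamma^{-(\tau_s-s)}$, so it cannot simply be discarded. A clean repair is to run the argument with $u_t^+$ in place of $u_t$: on $\{\norm{x_t}<r\}$ one still has $\bE[u_{t+1}^+\mid\mathcal{F}_t]\le\gamma' u_t^+$ for some $\gamma'\in(0,1)$, because with probability at least $1/d$ the sampled coordinate attains $(\partial_{i_t}f(x_t))^2\ge\frac{1}{d}\norm{\nabla f(x_t)}^2\ge\frac{\kappa}{d}u_t$, giving $u_{t+1}^+\le u_t-\min\bigl(u_t,\tfrac{\alpha\kappa}{2d}u_t\bigr)$ on that event; then $(\gamma')^{-(t\wedge\tau_s)}u^+_{t\wedge\tau_s}$ is genuinely nonnegative and your Markov/Borel--Cantelli/union-over-$s$/Tonelli scheme goes through, with $u_t=u_t^+$ on the event $\{x_t\to x^*\}$. (Alternatively, you could simply substitute the paper's Lemmas~\ref{lem:area1}--\ref{lem:fconvrate} for this stage and keep your telescoping transfer.) With that repair, your outline constitutes a complete and simpler proof of the proposition.
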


Throughout this paper, we say that a sequences $y_t$ in $\bR^d$ or $\bR$ converges exponentially to $y^*$ as $t\to+\infty$ if
\begin{equation*}
    \limsup_{t\to+\infty} \frac{1}{t} \log \norm{y_t-y^*} < 0.
\end{equation*}
Note that our notion of exponential convergence is essentially at least exponential convergence, which includes convergence rates faster than exponential rates.
The proof of Proposition~\ref{prop:positive_Lyapunov} follows exactly the same line as in the proof of ~\cite{chen2023global}*{Proposition 3.1} and is hence omitted. The proof of Proposition~\ref{prop:converexpo} is presented in Section~\ref{sec:pf_converexpo}. Proposition~\ref{prop:converexpo} states that almost every convergent trajectory has an exponential convergence rate, which eventually makes the condition \eqref{eq:local_sequence} true and hence guarantees that we can identify convergent trajectories with points on the center-stable manifold. In addition, Proposition~\ref{prop:positive_Lyapunov} shows that the center-stable manifold is of dimension at most $d-1$ due to the presence of the positive Lyapunov exponent, and is hence of Lebesgue measure zero. These observations lead to Theorem~\ref{thm:single_xstar}. We then present the proof of Theorem~\ref{thm:single_xstar} based on Assumption~\ref{asp:SMT}, Proposition~\ref{prop:positive_Lyapunov}, and Proposition~\ref{prop:converexpo}.

\begin{proof}[Proof of Theorem~\ref{thm:single_xstar}]
    Let $\widetilde{\Omega}^e\subset \Omega^e$ be $\theta$-invariant with $\bP^e(\widetilde{\Omega}^e)=1$, so that all statements in Theorem~\ref{thm:MET} and Theorem~\ref{thm:SMT} are true for every $\omega^e\in\widetilde{\Omega}^e$, and let $\rho:\Omega^e\to(0,+\infty)$ be the tempered random variable as in Theorem~\ref{thm:SMT} satisfying
    \begin{equation*}
        \lim_{t\to\pm\infty} \frac{1}{t} \log \rho(\theta^t\omega^e) = 0,\quad\forall~\omega^e\in \widetilde{\Omega}^e.
    \end{equation*}
    Moreover, it follows from the Kolmogorov's zero-one law that $(\Omega^e,\mathcal{F}^e,\bP^e,\theta)$ is ergodic, which implies that $p(\omega^e),\lambda_i(\omega^e),d_i(\omega^e)$ are all constant over $\omega^e\in \widetilde{\Omega}^e$. We thus drop the dependence on $\omega^e$ and denote these constants by $p,\lambda_i,d_i$ for simplicity. Denote $E^{cs}(\omega^e)$ and $W^{cs}(\omega^e)$ as the center-stable Oseledets subspace and the center-stable manifold, respectively. It follows from Proposition~\ref{prop:positive_Lyapunov} that
    \begin{equation}\label{eq:dim_Wcs}
        \mathrm{dim} \, W^{cs}(\omega^e) = \mathrm{dim}\, E^{cs}(\omega^e) = d - \sum_{\lambda_i>0} d_i \leq d-1,\quad\forall~\omega^e\in\widetilde{\Omega}^e,
    \end{equation}

    Define
    \begin{equation*}
        \widetilde{\Theta}^e = \left(\prod_{t\in\bZ_{<0}}\Omega_t\right)\times \widetilde{\Theta}\subseteq\Theta^e = \bR^d\times\Omega^e,
    \end{equation*}
    where $\widetilde{\Theta}\subseteq\Theta$ is from Proposition~\ref{prop:converexpo}. Consider any
    \begin{equation*}
        (x_0,\omega^e)\in \Theta^e(x^*) \cap \widetilde{\Theta}^e \cap \left(\bR^d\times\widetilde{\Omega}^e\right).
    \end{equation*}
    It follows from Proposition~\ref{prop:converexpo} that
    \begin{equation*}
        \varphi(t,\omega^e,x_0) \to x^* = 0,\quad t\to+\infty,
    \end{equation*}
    exponentially, i.e.,
    \begin{equation*}
        \limsup_{t\to+\infty} \frac{1}{t} \log \norm{\varphi(t,\omega^e,x_0)} < 0.
    \end{equation*}
    Therefore, there exists $T(x_0,\omega^e)\in\bN$ such that
    \begin{equation*}
        \frac{1}{t} \log \norm{\varphi(t,\omega^e,x_0)} < \frac{1}{t} \log \rho(\theta^t\omega^e),\quad\forall~t\geq T(x_0,\omega^e),
    \end{equation*}
    and equivalently that,
    \begin{equation*}
        \norm{\varphi(t,\omega^e,x_0)} < \rho(\theta^t\omega^e),\quad\forall~t\geq T(x_0,\omega^e).
    \end{equation*}
    We then have that
    \begin{equation*}
    \varphi\left(t,\theta^{T(x_0,\omega^e)}\omega^e,\varphi\left(T(x_0,\omega^e),\omega^e,x_0\right)\right) \to x^* = 0,\quad t\to +\infty,
    \end{equation*}
    and 
    \begin{equation*}
        \norm{\varphi\left(t,\theta^{T(x_0,\omega^e)}\omega^e,\varphi\left(T(x_0,\omega^e),\omega^e,x_0\right)\right)} < \rho\left(\theta^t \theta^{T(x_0,\omega^e)}\omega^e\right),\quad\forall~t\in\bN.
    \end{equation*}
    Using Theorem~\ref{thm:SMT}, we can conclude that
    \begin{equation*}
        \varphi\left(T(x_0,\omega^e),\omega^e,x_0\right) \in W^{cs}\left(\theta^{T(x_0,\omega^e)}\omega^e\right),
    \end{equation*}
    which implies that
    \begin{equation*}
        x_0\in \varphi(T(x_0,\omega^e),\omega^e)^{-1}\left(W^{cs}\left(\theta^{T(x_0,\omega^e)}\omega^e\right)\right) \subseteq \bigcup_{t\in\bN} \varphi(t,\omega^e)^{-1} \left(W^{cs}(\theta^t \omega^e)\right),
    \end{equation*}
    where the Lebesgue measure of the set on the right-hand side can be computed from \eqref{eq:dim_Wcs} as
    \begin{equation*}
        \textsc{Leb}\left(\bigcup_{t\in\bN} \varphi(t,\omega^e)^{-1} \left(W^{cs}(\theta^t \omega^e)\right)\right) \leq \sum_{t\in\bN} \textsc{Leb}\left(\varphi(t,\omega^e)^{-1} \left(W^{cs}(\theta^t \omega^e)\right)\right) = 0,
    \end{equation*}
    since the image of a manifold of dimension at most $d-1$ under a $\mathcal{C}^1$ map is of Lebesgue measure zero. Therefore, by Fubini's theorem, one obtains that
    \begin{equation*}
        \mu^e\left(\Theta^e(x^*) \cap \widetilde{\Theta}^e \cap \left(\bR^d\times\widetilde{\Omega}^e\right)\right)\leq \int_{\Omega^e} \textsc{Leb}\left(\bigcup_{t\in\bN} \varphi(t,\omega^e)^{-1} \left(W^{cs}(\theta^t \omega^e)\right)\right) \bP^e(\mathrm{d}\omega^e) = 0,
    \end{equation*}
    and hence that
    \begin{equation*}
        \mu^e\left(\Theta^e(x^*)\right) \leq \mu^e\left(\Theta^e(x^*) \cap \widetilde{\Theta}^e \cap \left(\bR^d\times\widetilde{\Omega}^e\right)\right) + \mu^e\left(\Theta^e \setminus \widetilde{\Theta}^e\right) + \mu^e\left(\Theta^e \setminus (\bR^d\times\widetilde{\Omega}^e)\right) = 0,
    \end{equation*}
    where we used $\mu(\Theta \setminus  \widetilde{\Theta}) = 0$ and $\bP^e(\Omega^e\setminus \widetilde{\Omega}^e) = 0$. The proof is completed.
\end{proof}

\subsection{Comparison and discussion} 

As the conclusion of this section, we make a technical comparison with ~\cite{chen2023global}, whose main theorem is of similar style to ours, i.e., for any $x_0$ that is not a strict saddle point, $x_t$ almost surely does not converge to a strict saddle point as $t\to+\infty$. The main difference is that the following assumption is required in ~\cite{chen2023global}.

\begin{assumption}[{~\cite[Assumption 3]{chen2023global}}]\label{Assump:non_zero_proj}
    For every $x^*\in \crits$, it holds that $\mathcal{P}_+^H(\omega) e_i\neq 0$, for every $i\in\{1,2,\dots,d\}$ and almost every $\omega\in\Omega$, where $\mathcal{P}_+^H(\omega)$ is the orthogonal projection onto $W_{+}^H(\omega) = \bigoplus_{\lambda_i(\omega) >0} V_i^H(\omega)$ and $V_i^H(\omega)$ is the eigenspace corresponding to the eigenvalue $\lambda_i(\omega)$ of $\Lambda^H(\omega)$ as in \eqref{eq:lim_Lambda_omega}, $i=1,2,\dots,p(\omega)$, for the linearized system at $x^*$ with $H=\nabla^2 f(x^*)$.
\end{assumption}

We remark that ~\cite{chen2023global} only considers sample $\omega$ corresponding to one-sided time $\mathbb{T} = \bN$, without extending to $\omega^e$ and two-sided time $\mathbb{T} = \bZ$. Therefore, there is no explicit notion of center-stable and unstable Oseledets subspaces, but the limiting matrix $\Lambda^H(\omega)$ and its eigenspaces $V_i^H(\omega),\ i=1,2,\dots,p(\omega)$ are still well-defined, as they are defined one-sidedly for $t\to+\infty$ in \eqref{eq:lim_Lambda_omega}. In our notation, one can verify that $W_+^H(\omega^e) = \bigoplus_{\lambda_i >0} V_i^H(\omega^e)$ is the orthogonal complement of the center-stable Oseledets subspace in $\bR^d$.

The main proof in ~\cite{chen2023global} is that, with Assumption~\ref{Assump:non_zero_proj} and additional randomness in stepsizes, the iterate would have a non-negligible component in $W_{+}^H(\omega)$ with high probability, which will be amplified sufficiently and hence drive the dynamics to leave the neighborhood of $x^*\in\crits$, avoiding the convergence to $x^*$. However, there are two main drawbacks of this analytical framework. Firstly, additional randomness is a bit artificial, making the randomized coordinate gradient descent not in its simplest format. In fact, the sample $\omega$ in ~\cite{chen2023global} is a sequence of not only the random coordinates, but also the random stepsizes. Secondly, though the techinical Assumption~\ref{Assump:non_zero_proj} can be verified generically, it excludes some practical Hessian matrices, such as $H=\nabla^2 f(x^*)=\text{diag}(H_1,H_2)$ where all eigenvalues of $H_1$ are positive and all eigenvalues of $H_2$ are negative, which is already acknowledged in ~\cite{chen2023global}.

Overall, our present work overcomes some technical obstacles in ~\cite{chen2023global}, and establishes a neater and more general analytical framework for the convergence of randomized coordinate gradient descent. This general framework could be potentially applied or extended for other randomized algorithms.

\section{Technical lemmas and propositions}\label{sec:technical_lems}

We collect all technical lemmas and propositions in this section.

\subsection{Invertibility of $\phi(\omega^e)$}
\label{sec:invertible_phi}

Recall that $\phi(\omega^e):x \mapsto x - \alpha e_i e_i^\top \nabla f(x)$, where $i = \pi_0(\omega^e)$ is the index in $\omega^e$ at the time $t=0$, and we assume that $f\in \mathcal{C}^{2}(\bR^d)$ and $\norm{\nabla^2 f(x)}\leq M,\ \forall~x\in\bR^d$, as in Assumption~\ref{Assump:Hess_bdd}. The Jacobian matrix of $\phi(\omega^e)$ at $x$ is
\begin{equation*}
    J(x) = I - \alpha e_i e_i^\top \nabla^2 f(x),
\end{equation*}
which is always invertible by the Sherman-Morison formula if $\alpha < 1/M$. Therefore, according to the inverse function theorem, at any $x\in\bR^d$, $\phi(\omega^e)$ is locally invertible and the inverse is also $\mathcal{C}^1$. Moreover, $\alpha < 1/M$ yields that
\begin{equation*}
    \lim_{\norm{x} \to +\infty} \norm{\phi(\omega^e)(x)} = +\infty,
\end{equation*}
which implies that $\phi(\omega^e)$ is proper, i.e., the preimage of a compact set is still compact. One can thus apply the Hadamard's global inverse function theorem ~\cites{guillemin2010differential, krantz2002implicit} and conclude that $\phi(\omega^e)$ is globally invertible.

\subsection{Validation of Assumption~\ref{asp:SMT}}
\label{sec:validate_asp_SMT}

Suppose that Assumption~\ref{Assump:Hess_bdd} is satisfied and that $\alpha < 1/M$. Then $\varphi(t,\omega^e)$ is a $\mathcal{C}^{1}$ random dynamical system since $\phi(\omega^e)\in \mathcal{C}^1(\bR^d)$. Using the same argument as in ~\cite{chen2023global}, it can be verified that $A^H(\omega^e)$ and $A^H(\omega^e)^{-1}$ are uniformly bounded in $\omega^e\in\Omega^e$ and hence that the linearized system $\Phi^H(t,\omega^e)$ satisfies the conditions of the multiplicative ergodic theorem (Theorem~\ref{thm:MET}). Therefore, the first condition in Assumption~\ref{asp:SMT} is satisfied.

For the second condition in Assumption~\ref{asp:SMT}, the residual of the linearization at $x^*=0$ and $t = 1$ can be computed as
\begin{align*}
    F(1,\omega^e,x) & = \varphi(1,\omega^e, x) - \Phi^H(1,\omega^e) = \phi(\omega^e) - A^H(\omega^e) \\
    & = \left( x - \alpha e_i e_i^\top \nabla f(x)\right) - \left( x - \alpha e_i e_i^\top H x\right) = \alpha e_i e_i^\top (H x - \nabla f(x)),
\end{align*}
where $i = \pi_0(\omega^e)$. Note that $x\mapsto H x - \nabla f(x)$ is a $\mathcal{C}^1$ map since $f\in\mathcal{C}^2(\bR^d)$. As in assumption~\ref{Assump:Hess_bdd}, there exists a neighborhood $N(x^*)$ of $x^*=0$ and constants $B_0, B_1, L$ so that 
\begin{gather*}
    \sup_{x\in N(x^*) } \norm{D_x^k F (1,\omega^e,x)}\leq B_k,\quad\forall~ 0 \leq k \leq 1 ,\ \omega^e \in \Omega^e, \\
    \norm{D_x F(1,\omega^e,x)} \leq \alpha L \norm{x},\quad\forall~ x\in N(x^*),\ \omega^e \in \Omega^e.
\end{gather*} 
This verifies \eqref{eq:asp_SMT} as constants are tempered random variables.

\subsection{Proof of Proposition~\ref{prop:converexpo}} \label{sec:pf_converexpo}

This subsection proves Proposition~\ref{prop:converexpo}, which only uses the original probability space $(\Omega,\mathcal{F},\bP)$ that is the product space of $(\Omega_t,\Sigma_t,\bP_t)$ for all $t\in\bN$, not the extended one $(\Omega^e,\mathcal{F}^e,\bP^e)$. We will use the filtration $\{\mathcal{F}_t\}_{t\in\bN}$ where $\mathcal{F}_t$ is the sigma algebra generated by $\big\{\prod_{s=0}^t B_s \times \prod_{s=t+1}^{+\infty} \Omega_s : B_s\in \Sigma_s,\ s=0,1,\dots,t\big\}$.

Recall that Proposition~\ref{prop:converexpo} states that for any strict saddle point $x^* \in\crits$ with non-degenerate Hessian $\nabla f(x^*)$,
there exists $\widetilde{\Theta}\subset\Theta= \bR^d\times\Omega$ with $\mu(\Theta \setminus\widetilde{\Theta}) = 0$, such that for any $(x_0,\omega)\in \widetilde{\Theta}$, $x_t(\omega) \to x^*$ as $t\to+\infty$ implies that $x_t(\omega) \to x^*$ exponentially. The proof is divided into two parts. Firstly, we construct $\Theta_1\subset \Theta$ with $\mu(\Theta \setminus\Theta_1) = 0$, so that for any $(x_0,\omega)\in \Theta_1$, $x_t(\omega) \to x^*$ implies that $f(x_t(\omega)) \to f(x^*)$ exponentially. Secondly, we construct $\Theta_2\subset \Theta$ with $\mu(\Theta \setminus\Theta_2) = 0$ and define $\widetilde{\Theta} = \Theta_1 \cap \Theta_2$. It can be shown that for any $(x_0,\omega)\in \widetilde{\Theta}$, the exponential convergence of $f(x_t(\omega))$ to $f(x^*)$ implies that $x_t(\omega) \to x^*$ exponentially. These two parts are elaborated in Section~\ref{sec:Theta1} and Section~\ref{sec:Theta2}, respectively. In the rest of this subsection, we always assume that $x^*=0$ and $f(x^*) = 0$ without loss of generality.

 \subsubsection{Exponential convergence of $f(x_t(\omega))$}\label{sec:Theta1}
 For any $(x_0,\omega)\in\Theta$, where $\omega = (i_0,i_1,i_2,\dots)$, and any $t\in\bN$, we define
\begin{equation}\label{eq:It}
    I_t(x_0, \omega)=\begin{cases}
        1,&\text{if } |e_{i_t}^{\top} \nabla f(x_t)| \geq \frac{1}{\sqrt{d}} \norm{\nabla f(x_t)},\\
    0,&\text{otherwise},
    \end{cases}
\end{equation}
where $x_t = x_t(\omega)$ is generated by Algorithm~\ref{alg:RCGD} given $(x_0,\omega)$. We further define
\begin{equation}\label{eq:area1}
    \Theta_1 = \left\{(x_0,\omega)\in \Theta: \liminf_{t \to +\infty} \frac{\sum_{s=0}^{t-1} I_s(x_0, \omega)}{t} \geq \frac{1}{d}\right\}.
\end{equation}

\begin{lemma}\label{lem:area1}
    It holds that $\mu(\Theta \setminus \Theta_1) = 0$.
\end{lemma}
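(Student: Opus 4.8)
The plan is to show that at $\mu$-almost every $(x_0,\omega)$, the long-run frequency of iterations at which the sampled coordinate $i_t$ captures at least a $1/\sqrt{d}$ fraction (in $\ell_2$) of the gradient is at least $1/d$. The natural tool is a martingale/law-of-large-numbers argument on the filtration $\{\mathcal{F}_t\}_{t\in\bN}$. First I would observe that, conditioned on $\mathcal{F}_t$ (equivalently, on $x_t$), the coordinate $i_t$ is uniform on $\{1,\dots,d\}$, and since $\|\nabla f(x_t)\|^2 = \sum_{j=1}^d |e_j^\top \nabla f(x_t)|^2$, at least one coordinate $j$ satisfies $|e_j^\top \nabla f(x_t)|^2 \geq \frac{1}{d}\|\nabla f(x_t)\|^2$; hence
\begin{equation*}
    \bE\!\left[ I_t(x_0,\omega) \,\middle|\, \mathcal{F}_t \right] \;=\; \bP\!\left( |e_{i_t}^\top \nabla f(x_t)| \geq \tfrac{1}{\sqrt d}\|\nabla f(x_t)\| \,\middle|\, \mathcal{F}_t \right) \;\geq\; \frac{1}{d}.
\end{equation*}

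Next I would set $S_t = \sum_{s=0}^{t-1}\big( I_s(x_0,\omega) - \bE[I_s \mid \mathcal{F}_s]\big)$, which is a martingale with increments bounded in absolute value by $1$. By the martingale strong law of large numbers (e.g., via the Azuma--Hoeffding inequality together with Borel--Cantelli, or directly from the $L^2$ martingale SLLN), $S_t/t \to 0$ almost surely. Combining this with the conditional lower bound gives, almost surely,
\begin{equation*}
    \liminf_{t\to+\infty} \frac{1}{t}\sum_{s=0}^{t-1} I_s(x_0,\omega) \;=\; \liminf_{t\to+\infty}\left( \frac{S_t}{t} + \frac{1}{t}\sum_{s=0}^{t-1}\bE[I_s\mid\mathcal{F}_s]\right) \;\geq\; 0 + \frac{1}{d}.
\end{equation*}
This holds for $\bP$-almost every $\omega$, for each fixed $x_0$; since the exceptional set is measurable in $(x_0,\omega)$, Fubini (Tonelli) then yields $\mu(\Theta\setminus\Theta_1)=0$, which is exactly the claim. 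Strictly speaking one should first argue that $(x_0,\omega)\mapsto I_t(x_0,\omega)$ and the relevant $\liminf$ are jointly measurable, which follows from the continuity of $f$ and $\nabla f$ and the fact that each $x_t$ is a composition of continuous maps in $x_0$ and measurable in $\omega$.

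The main subtlety—rather than obstacle—is that the bound $\bE[I_t \mid \mathcal{F}_t] \geq 1/d$ degenerates only when $\nabla f(x_t)=0$, in which case the indicator $I_t$ is nonetheless defined to be $1$ (since the inequality $0 \geq 0$ in \eqref{eq:It} holds), so the bound is still valid; this edge case needs a one-line check. Everything else is routine: the increments are bounded so all integrability needed for the martingale SLLN is automatic, and no assumption beyond $f\in\mathcal{C}^1$ (already implied by Assumption~\ref{Assump:Hess_bdd}) is used here. I would therefore expect the write-up to be short, with the only care points being the joint-measurability remark and the invocation of the correct martingale limit theorem.
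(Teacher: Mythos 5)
Your proposal is correct and takes essentially the same route as the paper: a martingale strong law of large numbers over the coordinate-sampling filtration combined with Fubini, the only cosmetic difference being that the paper minorizes $I_t$ by an indicator $J_t$ (the smallest index attaining the $\tfrac{1}{\sqrt d}$ bound) whose conditional probability is exactly $\tfrac{1}{d}$, whereas you center $I_t$ directly by its conditional expectation, which is bounded below by $\tfrac{1}{d}$. One small indexing slip: with the paper's convention $\mathcal{F}_t=\sigma(i_0,\dots,i_t)$, the variable $I_t$ is already $\mathcal{F}_t$-measurable, so you should condition on $\mathcal{F}_{t-1}$ (with respect to which $x_t$ is measurable but $i_t$ is not); this is an off-by-one in notation, not a gap in the argument.
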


\begin{proof}
    Define 
    \begin{equation*}
        J_t(x_0,\omega) = \begin{cases}
        1,&\text{if } i_t = \min \left\{i: |e_{i}^{\top} \nabla f(x_t)| \geq \frac{1}{\sqrt{d}} \norm{\nabla f(x_t)}\right\} ,\\
    0,&\text{otherwise}.
    \end{cases}
    \end{equation*}
    It is clear that $I_t(x_0,\omega) \geq J_t(x_0,\omega)$ and that
    \begin{equation} \label{argmax}
        \mathbb{P}\left( J_t(x_0,\omega) = 1\;\big\vert\; \mathcal{F}_{t-1}\right)= \frac{1}{d}.
  \end{equation}
    which is because that $\bigl\lvert e_{i_t}^{\top} \nabla f(x_t)\bigr\rvert \geq \frac{1}{\sqrt{d}} \norm{\nabla f(x_t)}$ is true if the $i_t$-th entry of $\nabla f(x_t)$ has the largest absolute value among all entries of $\nabla f(x_t)$, and that $i_t$ is sampled uniformly randomly from $\{1,2,\dots,d\}$.
    Consider
    \begin{equation*}
        \widetilde{J}_t(x_0,\omega) = J_t(x_0,\omega) - \frac{1}{d},
    \end{equation*}
    and we can have that
    \begin{equation*}
        \bE\left(\widetilde{J}_t(x_0,\omega) \;\big\vert\; \mathcal{F}_{t-1}\right) = 0,
    \end{equation*}
    which indicates that $\big\{\sum_{s=0}^t\widetilde{J}_s(x_0,\omega)\big\}_{t\in\bN}$ is a martingale with respect to $\{\mathcal{F}_t\}_{t\in\bN}$. According to the strong law of large numbers for martingales ~\cite{csorgHo1968strong}, for any $x_0\in\bR^d$, we have for $\bP$-almost every $\omega$ that
    \begin{equation*}
        \lim_{t \to +\infty} \frac{\sum_{s=0}^{t-1} \widetilde{J}_s(x_0, \omega)}{t} = 0,
    \end{equation*}
    which implies that
    \begin{equation*}
        \liminf_{t \to +\infty} \frac{\sum_{s=0}^{t-1} I_s(x_0, \omega)}{t}  \geq \lim_{t \to +\infty} \frac{\sum_{s=0}^{t-1} J_s(x_0, \omega)}{t} = \lim_{t \to +\infty} \frac{\sum_{s=0}^{t-1} \widetilde{J}_s(x_0, \omega)}{t} + \frac{1}{d} = \frac{1}{d},
    \end{equation*}
    For any $x_0\in\bR^d$, we set
    \begin{equation*}
        \Omega_1(x_0) = \left\{\omega\in \Omega: \liminf_{t \to +\infty} \frac{\sum_{s=0}^{t-1} I_s(x_0, \omega)}{t} \geq \frac{1}{d}\right\},
    \end{equation*}
    which satisfies
    \begin{equation*}
        \bP(\Omega_1(x_0)) = 1.
    \end{equation*}
    One can thus conclude by applying the Fubini's theorem that  
    \begin{equation*}
        \mu(\Theta \setminus \Theta_1) = \int_{\bR^d} \int_{\Omega} \mathbf{1}_{\Theta \setminus \Theta_1}(x_0,\omega) \mathbb{P}(\mathrm{d}\omega) \textsc{Leb} (\mathrm{d}x_0) = \int_{\bR^d} \mathbb{P}(\Omega\setminus\Omega_1(x_0)) \textsc{Leb} (\mathrm{d}x_0) = 0,
    \end{equation*}
    where $\mathbf{1}_{\Theta \setminus \Theta_1}(x_0,\omega)$ is the indicator function, i.e., $\mathbf{1}_{\Theta \setminus \Theta_1}(x_0,\omega) = 1$ if $(x_0,\omega)\in\Theta\setminus\Theta_1$ and $\mathbf{1}_{\Theta \setminus \Theta_1}(x_0,\omega) = 0$ otherwise. The proof is hence completed.
\end{proof}

\begin{lemma}\label{lem:fconvrate}
    Suppose that Assumption~\ref{Assump:Hess_bdd} holds and that $0<\alpha<1/M$. Let $x^* = 0\in\crits$ be a strict saddle point of $f$ with non-degenerate Hessian $\nabla^2 f(x^*)$ and let $f(x^*) = 0$.
    For $(x_0,\omega) \in \Theta_1$, if $x_t(\omega) \to 0$ as $t\to+\infty$, then $f(x_t(\omega)) \to 0$ exponentially as $t\to+\infty$.
\end{lemma}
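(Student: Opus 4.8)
The plan is to derive a geometric decay recursion for the \emph{scalar} sequence $f_t := f(x_t(\omega))$ along the ``good'' iterations that $\Theta_1$ makes plentiful, using the local quadratic behaviour of $f$ at the non-degenerate critical point $x^*=0$. Set $g_t := e_{i_t}\trans\nabla f(x_t(\omega))$, so the update reads $x_{t+1} = x_t - \alpha e_{i_t} g_t$. Since $\norm{\nabla^2 f}\le M$ on all of $\bR^d$, the gradient is globally $M$-Lipschitz, and the descent lemma gives $f_{t+1}\le f_t - \alpha\bigl(1-\tfrac{M\alpha}{2}\bigr)g_t^2 = f_t - c_1 g_t^2$ with $c_1 := \alpha\bigl(1-\tfrac{M\alpha}{2}\bigr) > 0$ because $\alpha < 1/M$. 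In particular $(f_t)_{t}$ is non-increasing; combined with $x_t\to 0$ and $f(0)=0$ this forces $f_t\ge 0$ for every $t$ (if $f_{t_0}<0$ for some $t_0$, monotonicity would give $f_t\le f_{t_0}<0$ for all $t\ge t_0$, contradicting $f_t\to 0$).

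Second, I would establish a local gradient-domination inequality near $x^*=0$: there exist $r>0$ and $\kappa>0$ with $\norm{\nabla f(x)}^2 \ge \kappa\, f(x)$ for all $x$ in the ball $B_r$ about $0$. Indeed, with $H=\nabla^2 f(0)$ and $\sigma := \min_i|\lambda_i(H)| > 0$ by Assumption~\ref{Assump:non_degenerate}, a first-order Taylor expansion of $\nabla f$ at $0$ together with the local Lipschitz continuity of $\nabla^2 f$ on $N(x^*)$ from Assumption~\ref{Assump:Hess_bdd} yields $\norm{\nabla f(x)} \ge \sigma\norm{x} - \tfrac{L}{2}\norm{x}^2 \ge \tfrac{\sigma}{2}\norm{x}$ once $\norm{x}$ is sufficiently small, while the global bound $\norm{\nabla^2 f}\le M$ and $f(0)=0,\ \nabla f(0)=0$ give $f(x)\le\tfrac{M}{2}\norm{x}^2$; eliminating $\norm{x}$ and using $f(x)\ge 0$ along the trajectory yields the claim with $\kappa = \sigma^2/(2M)$.

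Third, I would assemble the recursion and invoke $\Theta_1$. On an iteration with $I_t(x_0,\omega)=1$ and $x_t\in B_r$, the definition of $I_t$ gives $g_t^2 \ge \tfrac1d\norm{\nabla f(x_t)}^2$, hence $f_{t+1} \le f_t - \tfrac{c_1\kappa}{d}f_t$, so (using $f_t\ge 0$) $f_{t+1}\le\gamma f_t$ with $\gamma := 1 - \min\{\tfrac{c_1\kappa}{d},\tfrac12\}\in[\tfrac12,1)$; on all other iterations we merely keep $f_{t+1}\le f_t$. Since $x_t\to 0$, fix $T\in\bN$ with $x_t\in B_r$ for $t\ge T$; then $f_t\le f_T\,\gamma^{S_t}$ for $t\ge T$, where $S_t := \sum_{s=T}^{t-1} I_s(x_0,\omega)$. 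Because $(x_0,\omega)\in\Theta_1$, $\liminf_{t\to\infty}\tfrac1t\sum_{s=0}^{t-1}I_s(x_0,\omega)\ge\tfrac1d$, and removing the finitely many indices $s<T$ does not affect this liminf, so $S_t\ge\tfrac{t}{2d}$ for all large $t$; therefore $f_t\le f_T\,\gamma^{t/(2d)}$ and $\limsup_{t\to\infty}\tfrac1t\log f_t \le \tfrac{1}{2d}\log\gamma < 0$ (with the convention $\log 0=-\infty$ handling $f_t=0$), i.e.\ $f(x_t(\omega))\to 0$ exponentially.

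The only genuinely delicate point is the local estimate $\norm{\nabla f(x)}^2\gtrsim f(x)$ of the second step: it is where both hypotheses are actually used---the Lipschitz Hessian to control the Taylor remainder of $\nabla f$, and non-degeneracy so that $\norm{Hx}\gtrsim\norm{x}$---and it relies on first knowing $f_t\ge 0$ so that $|f(x_t)-f(x^*)| = f(x_t)$. Everything else is the standard coordinate-descent sufficient-decrease computation combined with the combinatorial density of good coordinate hits supplied by $\Theta_1$; note in particular that the argument never uses that $x^*$ is a saddle, only that it is a non-degenerate critical point with value $0$.
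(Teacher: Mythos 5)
Your proof is correct and follows essentially the same route as the paper's: sufficient decrease per step, nonnegativity of $f(x_t)$ from monotonicity and $x_t\to 0$, a local lower bound on the gradient at the non-degenerate critical point combined with $f(x)\le \tfrac{M}{2}\norm{x}^2$, and the density of good coordinate hits from $\Theta_1$. Your ``gradient domination'' inequality $\norm{\nabla f(x)}^2\ge \kappa f(x)$ is just a repackaging of the paper's two estimates $\norm{\nabla f(x)}\ge\sigma\norm{x}$ and $f(x)\le\tfrac{M}{2}\norm{x}^2$, so the arguments coincide in substance.
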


\begin{proof}
    Consider any $(x_0,\omega) \in\Theta_1$ with $x_t = x_t(\omega)\to 0$. With Assumption~\ref{Assump:Hess_bdd} and $0<\alpha<1/M$, $f(x_t)$ is monotonically decreasing in $t$, which can be derived by Taylor's expansion at $x_t$,  
    \begin{equation}\label{taylor}
      \begin{split}
            & f(x_{t+1}) = f\left(x_t-\alpha e_{i_{t}}e_{i_{t}}\trans \nabla f(x_t)\right) \\
            & \qquad = f(x_t) - \alpha \left(e_{i_t}\trans \nabla f(x_t)\right)^2  + \frac{1}{2}\alpha^2 \left(e_{i_t}\trans \nabla f(x_t)\right)^2 \cdot e_{i_t}\trans \nabla^2 f\left(x_t-\theta_t \alpha e_{i_{t}}e_{i_{t}}\trans \nabla f(x_t)\right) e_{i_t} \\
            & \qquad \leq f(x_t) - \frac{1}{2} \alpha \left(e_{i_t}\trans \nabla f(x_t)\right)^2 \leq f(x_t),
      \end{split}
    \end{equation}
    where $\theta_t\in(0, 1)$. 
    The monotonicity implies that
    \begin{equation*}
        f(x_t)\geq f(0) = 0,\quad \forall~t\in\bN.
    \end{equation*}
    
    Note that $\nabla^2 f(0)$ is non-degenerate. There exist a neighborhood $U_1$ of $0$ and a constant $\sigma >0$ such that
  \begin{equation}\label{eq:U1}
      \norm{\nabla f(x)}\geq \sigma\norm{x},\quad \forall\ x\in U_1.
  \end{equation}
    Since $x_t \to 0$, we have $x_t \in U_1,\ \forall~t\geq T$ for some $T\in\bN$. Therefore, for any $t\geq T$, if $I_t(x_0, \omega) = 1$, we have from \eqref{eq:It}, \eqref{taylor}, and \eqref{eq:U1} that
    \begin{equation}\label{eq:decay_fxt_It=1}
        f(x_{t+1}) \leq f(x_t) - \frac{\alpha}{2d} \norm{\nabla f(x_t)}^2 \leq f(x_t) - \frac{\alpha\sigma^2}{2d} \norm{x_t}^2 \leq \left(1- \frac{\alpha  \sigma^2}{Md}\right)f(x_{t}),
    \end{equation}
    where the last inequality follows from the Tayler expansion at $0$, namely
    \begin{align*}
        f(x_t) = f(0) + \nabla f(0)\trans x_t +  \frac{1}{2}x_t^{\top} \nabla^2 f(\theta_t' x_t)x_t = \frac{1}{2}x_t^{\top} \nabla^2 f(\theta_t' x_t) x_t \leq \frac{M}{2} \norm{x_t}^2,
    \end{align*}
    with $\theta_t'\in(0,1)$. Applying \eqref{eq:decay_fxt_It=1} and the monotonicity of $f(x_t)$ repeatedly, one obtains that
    \begin{equation*}
        f(x_{t}) \leq \left(1- \frac{\alpha  \sigma^2}{Md}\right)^{\sum_{s=T}^{t-1} I_s(x_0, \omega) } f(x_T), \quad\forall~t\geq T.
    \end{equation*}

    Therefore, one can immediately conclude the exponential convergence rate of $f(x_t)\to 0$ as the construction of $\Theta_1$, say \eqref{eq:area1}, suggests that
    \begin{equation*}
        \liminf_{t \to +\infty} \frac{\sum_{s=T}^{t-1} I_s(x_0, \omega)}{t} = \liminf_{t \to +\infty} \frac{\sum_{s=0}^{t-1} I_s(x_0, \omega)}{t} \geq \frac{1}{d},
    \end{equation*}
    which completes the proof.
\end{proof}

\subsubsection{Exponential convergence of $x_t(\omega)$}\label{sec:Theta2}

We work with the same settings as in Lemma~\ref{lem:fconvrate}, and let $U_1, \sigma$ from \eqref{eq:U1}. Let $\rho  = \rho(\alpha, d, \sigma, M) \in (0, 1)$ be another constant satisfying
\begin{equation}\label{eq:def_r}
    \rho M+\frac{M \rho^2}{2} < \frac{\alpha  \sigma^2}{2d} (1-\rho)^2.
\end{equation}
Define
\begin{equation*}
        U_2 = U_2(\rho) = \bigcup_{x\in f^{-1}(0)} B(x, \rho\norm{x}),
\end{equation*}
where $B(x, r)$ is the open ball in $\bR^d$ centered at $x$ with radius being $r$, and
\begin{equation*}
    S = U_1  \cap U_2 \cap f^{-1}([0,+\infty)).
\end{equation*}
Then we further define that
\begin{equation}\label{eq:area2}
    \Theta_2 = \big\{(x_0,\omega)\in\Theta : x_t(\omega) \in S \text{ finitely often} \big\}.
\end{equation}

\begin{lemma}\label{lem:area2}
    Suppose that Assumption~\ref{Assump:Hess_bdd} holds and that $0<\alpha<1/M$. Let $x^* = 0\in\crits$ be a strict saddle point of $f$ with non-degenerate Hessian $\nabla^2 f(x^*)$ and let $f(x^*) = 0$. It holds that $\mu(\Theta\setminus\Theta_2)=0$.
\end{lemma}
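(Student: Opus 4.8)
The plan is to fix $x_0\in\bR^d$ and show that for $\bP$-almost every $\omega$ the trajectory $\{x_t(\omega)\}$ enters $S$ only finitely often; the claim $\mu(\Theta\setminus\Theta_2)=0$ then follows by Fubini's theorem, exactly as in the proof of Lemma~\ref{lem:area1}. The structural fact I will exploit is that $f(x_t)$ is non-increasing along \emph{every} trajectory — this is the middle inequality in \eqref{taylor}, which uses only Assumption~\ref{Assump:Hess_bdd} and $\alpha<1/M$. Consequently, if $f(x_s)<0$ for some $s$, then $f(x_t)<0$ for all $t\ge s$, and since $S\subseteq f^{-1}([0,+\infty))$ the trajectory can never return to $S$ after time $s$. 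So it is enough to show that each visit to $S$ has a conditionally-bounded-below chance of forcing $f<0$ at the next step.

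The deterministic core is the following claim: \emph{if $x_t\in S$ and the sampled coordinate at step $t$ is ``good'' in the sense of \eqref{eq:It}, i.e.\ $I_t(x_0,\omega)=1$, then $f(x_{t+1})<0$.} I would prove this by comparing two estimates on $f(x_t)$. First, $x_t\in U_1$ gives $\norm{\nabla f(x_t)}\ge\sigma\norm{x_t}$, and $x_t\ne 0$ because $0\notin U_2$ (a point $y\in f^{-1}(0)$ with $0\in B(y,\rho\norm{y})$ would force $\rho>1$); combining this with \eqref{taylor} and $I_t=1$ yields, just as in \eqref{eq:decay_fxt_It=1},
\[ f(x_{t+1})\le f(x_t)-\frac{\alpha}{2d}\norm{\nabla f(x_t)}^2\le f(x_t)-\frac{\alpha\sigma^2}{2d}\norm{x_t}^2. \]
Second, $x_t\in U_2$ gives $y\in f^{-1}(0)$ with $\norm{x_t-y}<\rho\norm{y}$, so $\norm{y}<\norm{x_t}/(1-\rho)$; Taylor-expanding $f$ about $y$ and using $f(y)=0$, $\norm{\nabla f(y)}=\norm{\nabla f(y)-\nabla f(0)}\le M\norm{y}$, and $\norm{\nabla^2 f}\le M$ globally gives
\[ 0\le f(x_t)\le M\norm{y}\,\norm{x_t-y}+\frac{M}{2}\norm{x_t-y}^2<\Big(M\rho+\frac{M\rho^2}{2}\Big)\frac{\norm{x_t}^2}{(1-\rho)^2}. \]
The defining inequality \eqref{eq:def_r} for $\rho$ is precisely what makes $\tfrac{\alpha\sigma^2}{2d}\norm{x_t}^2>f(x_t)$, so substituting into the first display forces $f(x_{t+1})<0$. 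I expect this two-sided estimate — recognizing that $U_2$ is exactly the set where $f$ fails to be quadratically bounded below near $0$, and matching the constants through \eqref{eq:def_r} — to be the main obstacle; the rest is bookkeeping.

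Finally I would pass to the probabilistic conclusion. If $x_t\in S$ for infinitely many $t$, then by the claim together with the monotonicity of $f$ (once $f<0$, no return to $S$), the sampled coordinate must be ``bad'', i.e.\ $I_t=0$, at \emph{every} such time $t$. Let $\tau_1<\tau_2<\cdots$ enumerate the (random, possibly finite in number) visit times to $S$; each $\tau_k$ is a stopping time for $\{\mathcal{F}_t\}_{t\in\bN}$ since $x_t$ is $\mathcal{F}_{t-1}$-measurable. As in \eqref{argmax}, on $\{x_t\in S\}$ there is always at least one coordinate attaining $\ge\tfrac{1}{\sqrt d}\norm{\nabla f(x_t)}$, so $\bP(I_t=1\mid\mathcal{F}_{t-1})\ge 1/d$ there; conditioning successively at $\tau_1,\dots,\tau_N$ (using that the events $\{I_{\tau_k}=0\}$ for $k<N$ are determined by $i_0,\dots,i_{\tau_N-1}$) gives $\bP(\tau_N<\infty,\ I_{\tau_1}=\cdots=I_{\tau_N}=0)\le(1-1/d)^N\to 0$ as $N\to\infty$. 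Hence $\bP(x_t(\omega)\in S\text{ infinitely often})=0$ for every $x_0$, and Fubini's theorem yields $\mu(\Theta\setminus\Theta_2)=0$.
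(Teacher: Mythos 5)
Your proposal is correct and follows essentially the same route as the paper: the same two-sided estimate on $f(x_t)$ for $x_t\in S$ (Taylor expansion at a nearby zero $y\in f^{-1}(0)$ combined with the $U_1$ gradient bound and \eqref{eq:def_r}) showing that a ``good'' coordinate draw forces $f(x_{t+1})<0$, the same use of monotonicity of $f$ to rule out any return to $S$ afterwards, the same geometric $(1-1/d)^N$ bound over successive visit times, and Fubini in $x_0$. The only cosmetic difference is that you phrase the per-visit bound via the event $\{I_{\tau_k}=0\text{ for all }k\le N\}$ while the paper bounds $\bP(\tau_{k+1}<\infty\mid\tau_k<\infty)\le 1-1/d$ directly; these are the same argument.
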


\begin{proof}
    Fix $x_0\in\bR^d$. For any $k\in\bN_+$, define a random variable $\tau_k(x_0)$ via
    \begin{equation*}
        \tau_k(x_0) =\tau_k(x_0,\omega) = \begin{cases}
            t, & \text{if }x_t(\omega) \in S \text{ 
             and }\#\{0\leq t'<t : x_{t'}(\omega)\in S\} = k - 1, \\
             +\infty, &\text{if } \#\{t'\in\bN : x_{t'}(\omega)\in S\} < k,
        \end{cases}
    \end{equation*}
    i.e., $\tau_k$ is the stopping time that $x_t$ visits $S$ for exactly $k$ times. By the definition of $\Theta_2$, i.e., \eqref{eq:area2}, it is clear that $(x_0, \omega) \in \Theta\setminus\Theta_2$ if any only if $\tau_k(x_0,\omega) < +\infty,\ \forall~k\in\bN$.

    Suppose that $x_t\in S$. Since $x_t\in U_2$, there exists $x_t'$ with $f(x_t') = 0$ and $x_t\in B(x_t',\rho\norm{x_t'})$. It is clear that $x_t'\neq 0$ since otherwise $B(x_t',\rho\norm{x_t'}) = \emptyset$, and that $\norm{x_t} \geq (1-\rho)\norm{x_t'}$.
    Using Taylor's expansion at $x_t'$ and Assumption~\ref{Assump:Hess_bdd}, we have
    \begin{align*}
        f(x_t) & \leq \norm{\nabla f(x_t')(x_t-x_t')} + \frac{M}{2}  \norm{x_t-x_t'}^2 \\
    & \leq M \norm{x_t'}\norm{x_t-x_t'}+\frac{M}{2}  \norm{x_t-x_t'}^2 \leq \left(\rho M+\frac{M\rho^2}{2}\right) \norm{x_t'}^2,
    \end{align*}
    which combined with \eqref{eq:decay_fxt_It=1} yields that, with probability at least $\frac{1}{d}$ conditioned on $x_t$, it holds that
    \begin{align*}
        f(x_{t + 1}) \leq f(x_t) - \frac{\alpha \sigma^2}{2d} \norm{x_t}^2 \leq \left(\rho M+\frac{M\rho^2}{2}\right) \norm{x_t'}^2 - \frac{\alpha \sigma^2}{2d} (1-\rho)^2 \norm{x_t'}^2 < 0,
    \end{align*}
    where we used \eqref{eq:def_r}. This proves that as long as $x_t\in S$,
    \begin{equation*}
        \bP\big(f(x_{t+1}) < 0 \ |\ x_t\big) \geq \frac{1}{d},
    \end{equation*}
    which implies that
    \begin{equation*}
        \bP\big(f(x_{t+1}) \geq 0\ |\ \tau_k(x_0) = t\big) \leq 1-\frac{1}{d}.
    \end{equation*}

    Note that $f(x_{t+1}) < 0$ implies that $x_{t'}\notin S$ for any $t'\geq t+1$ due to the monotonically decreasing property \eqref{taylor}.
     For any $k\geq 1$, one can estimate that 
    \begin{align*}
        & \bP\big(\tau_{k+1}(x_0) < +\infty \ |\ \tau_k(x_0)<+\infty\big) \\
        &\qquad =  \sum_{t\in\bN} \bP\big(\tau_k(x_0) = t, \tau_{k+1}(x_0) < +\infty \ |\ \tau_k(x_0) < +\infty\big)\\
        &\qquad \leq \sum_{t\in\bN} \bP\big(\tau_k(x_0) = t, f(x_{t+1}) \geq 0 \ |\ \tau_k(x_0) < +\infty\big) \\
        &\qquad = \sum_{t\in\bN} \bP\big(\tau_k(x_0) = t \ |\ \tau_k(x_0)<+\infty\big) \cdot \bP\big(f(x_{t+1}) \geq 0\ |\ \tau_k(x_0) = t\big) \\
        & \qquad \leq \left(1 - \frac{1}{d}\right) \sum_{t\in\bN} \bP\big(\tau_k(x_0) = t \ |\ \tau_k(x_0) < +\infty\big) \\
        & \qquad = 1 - \frac{1}{d}.
    \end{align*}
    Therefore,
    \begin{align*}
        & \bP\big(x_t\in S,\ \text{i.o.}\ |\ x_0\big) \leq \bP\big(\tau_k(x_0) < +\infty\big) \\
        &\qquad \leq \bP\big(\tau_1(x_0) < +\infty\big)\prod_{k' = 1}^{k-1} \bP\big(\tau_{k'+1}(x_0) < +\infty \ |\ \tau_{k'}(x_0)<+\infty\big) \\
        &\qquad \leq \left(1 - \frac{1}{d}\right)^{k-1},\quad \forall~k\geq 1,
    \end{align*}
    which implies that
    \begin{equation*}
        \bP\big(x_t\in S,\ \text{i.o.}\ |\ x_0\big) = 0,\quad\forall~x_0\in\bR^d.
    \end{equation*}
    One can then conclude $\mu(\Theta\setminus\Theta_2) = 0$ by integrating against $x_0\in\bR^d$.
\end{proof}

We need the following lemma characterizing some local geometric properties near $x^*$.

\begin{lemma}\label{lem:convexesti}
    Suppose that Assumption~\ref{Assump:Hess_bdd} holds and that $0<\alpha<1/M$. Let $x^* = 0\in\crits$ be a strict saddle point of $f$ with non-degenerate Hessian $\nabla^2 f(x^*)$ and let $f(x^*) = 0$. There exists a constant $p>0$ and a neighborhood $U$ of $x^*$, such that
    \begin{equation}\label{eq:fx_x2}
        f(x) \geq p \norm{x}^2,\quad \forall~x\in \left( U\cap f^{-1}([0,+\infty)) \right) \setminus S.
    \end{equation}
\end{lemma}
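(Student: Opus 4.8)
The plan is to establish the contrapositive in quantitative form: for a suitable constant $p>0$, if $x\in U_1$ (the neighborhood from \eqref{eq:U1}), $f(x)\ge 0$, and $f(x)<p\norm{x}^2$, then in fact $x\in U_2$, hence $x\in U_1\cap U_2\cap f^{-1}([0,+\infty))=S$. This immediately gives the lemma with $U=U_1$: any $x\in\bigl(U_1\cap f^{-1}([0,+\infty))\bigr)\setminus S$ lies in $U_1$ with $f(x)\ge 0$ but, since $x\notin S$, must satisfy $x\notin U_2$, so the contrapositive forces $f(x)\ge p\norm{x}^2$ (the point $x=x^*=0$ being trivial, with equality in \eqref{eq:fx_x2}).

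For the quantitative contrapositive, fix such an $x$ and assume for now that $x\neq 0$ and $f(x)>0$ (the cases $f(x)=0$ and $x=0$ are disposed of at the end). Since $x\in U_1$, \eqref{eq:U1} gives $\norm{\nabla f(x)}\ge\sigma\norm{x}>0$. The key step is a one-step descent estimate producing a nearby zero of $f$: set $v=\nabla f(x)/\norm{\nabla f(x)}$ and $\psi(s)=f(x-sv)$; Taylor's theorem together with $\norm{\nabla^2 f}\le M$ (Assumption~\ref{Assump:Hess_bdd}) gives $\psi(s)\le f(x)-\norm{\nabla f(x)}s+\tfrac{M}{2}s^2$ for $s\ge 0$. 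If $p\le\sigma^2/(2M)$, then $f(x)<p\norm{x}^2$ makes the discriminant $\norm{\nabla f(x)}^2-2Mf(x)$ positive, so the right-hand quadratic has a smallest positive root $s_-\le 2f(x)/\norm{\nabla f(x)}$ with $\psi(s_-)\le 0$; since $\psi(0)=f(x)\ge 0$, the intermediate value theorem yields $s^*\in[0,s_-]$ with $f(x-s^*v)=0$.

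Writing $y=x-s^*v$, we have $f(y)=0$ and $\norm{x-y}=s^*\le 2f(x)/\norm{\nabla f(x)}\le (2/\sigma)\,f(x)/\norm{x}<(2p/\sigma)\norm{x}$, hence $\norm{y}\ge(1-2p/\sigma)\norm{x}>0$ provided $p<\sigma/2$. Imposing in addition $p<\sigma\rho/\bigl(2(1+\rho)\bigr)$ (where $\rho\in(0,1)$ is the constant from \eqref{eq:def_r}) yields $\norm{x-y}<(2p/\sigma)\norm{x}<\rho(1-2p/\sigma)\norm{x}\le\rho\norm{y}$, so $x\in B(y,\rho\norm{y})\subseteq U_2$ by the definition of $U_2$. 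Thus one may take $p=\tfrac12\min\bigl\{\sigma^2/(2M),\,\sigma\rho/(2(1+\rho))\bigr\}$ and $U=U_1$. The case $f(x)=0$ with $x\neq 0$ is immediate, since then $x\in f^{-1}(0)$ and $x\in B(x,\rho\norm{x})\subseteq U_2$; and $x=0$ satisfies \eqref{eq:fx_x2} with equality.

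The main obstacle is conceptual rather than computational: one must recognize that, within $U_1\cap f^{-1}([0,+\infty))$, lying outside $S$ is the same as lying ``far'' from the zero level set $f^{-1}(0)$, and then control the distance from $x$ to $f^{-1}(0)$ by the quantity $f(x)/\norm{\nabla f(x)}\lesssim f(x)/\norm{x}$ through the descent-plus-intermediate-value argument. Once this mechanism is in place, selecting $p$ so that both the discriminant condition and the inclusion $B(y,\rho\norm{y})\subseteq U_2$ hold simultaneously is a routine calculation with the constants $\sigma$, $M$, $\rho$.
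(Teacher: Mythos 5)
Your proof is correct, and it takes a genuinely different route from the paper. The paper argues through the quadratic model $f^H(x)=\tfrac12 x^\top H x$: by homogeneity it gets a uniform bound $f^H(x)\ge p_+^H\norm{x}^2$ (resp.\ $\le p_-^H\norm{x}^2$) outside the conical neighborhood $U_2^H(\rho^H)$ of the zero set of $f^H$ (Lemma~\ref{lem:pH}), shows via a spectral/IVT argument that $U_2^H(\rho^H)\cap U'\subseteq U_2(\rho)\cap U'$ near the origin (Lemma~\ref{lem:U2H}, which uses a positive eigenvalue of $H$), and then transfers to $f$ by Taylor with $p=\tfrac12\min\{p_+^H,-p_-^H\}$. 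You instead prove a quantitative contrapositive directly for $f$: using $\norm{\nabla f(x)}\ge\sigma\norm{x}$ on $U_1$ and the global Hessian bound $M$, a descent-plus-intermediate-value argument along $-\nabla f(x)/\norm{\nabla f(x)}$ produces a zero $y$ of $f$ with $\norm{x-y}\le 2f(x)/\norm{\nabla f(x)}$, so smallness of $f(x)/\norm{x}^2$ forces $x\in B(y,\rho\norm{y})\subseteq U_2$ and hence $x\in S$; the constants check out (your $p=\tfrac12\min\{\sigma^2/(2M),\,\sigma\rho/(2(1+\rho))\}$ satisfies all the required strict inequalities, and the edge cases $f(x)=0$, $x=0$ are handled). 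Your argument is essentially an error-bound (Łojasiewicz-type) estimate of the distance to the level set $f^{-1}(0)$; it is more elementary, avoids both auxiliary lemmas (in particular the eigenvalue-sign bookkeeping of Lemma~\ref{lem:U2H}), yields an explicit constant $p$ in terms of $\sigma,M,\rho$, and does not need the saddle structure beyond non-degeneracy. What the paper's route buys in exchange is the two-sided conclusion (either $f(x)\ge p\norm{x}^2$ or $f(x)\le -p\norm{x}^2$ off $U_2$) and a more geometric picture of the zero level set as approximately conical near the saddle, though only the positive branch is used in the lemma.
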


With Lemma~\ref{lem:convexesti}, one can directly conclude the exponential convergence of $x_t(\omega)$ to $x^*$ from the exponential convergence of $f(x_t(\omega))$ to $f(x^*)$, assuming $x_t(\omega)\to x^*$ and $x_t(\omega)\in S$ finitely often. This leads to the proof of Proposition~\ref{prop:converexpo}.

\begin{proof}[Proof of proposition~\ref{prop:converexpo}]
    Assume $x^*=0$ and $f(x^*)=0$ without loss of generality and define $\widetilde{\Theta} = \Theta_1\cap \Theta_2$. It follows from Lemma~\ref{lem:area1} and Lemma~\ref{lem:area2} that $\mu(\Theta\setminus\widetilde{\Theta}) = 0$. For any $(x_0,\omega) \in \widetilde{\Theta}$ with $x_t(\omega) \to 0$, we have from Lemma~\ref{lem:fconvrate} that $f(x_t(\omega)) \to 0$ exponentially. The definition of $\Theta_2$ in \eqref{eq:area2} and the monotonically decreasing property \eqref{taylor} guarantee that $x_t(\omega)\in ( U\cap f^{-1}([0,+\infty)) ) \setminus S$ for large enough $t$. Then one can conclude the exponential convergence of $x_t(\omega)$ to $0$ by applying~\eqref{eq:fx_x2}.
\end{proof}

We prove Lemma~\ref{lem:convexesti} in the rest of this subsection, where we denote $H = \nabla^2 f(0)\in\bR^{d\times d}$ that is symmetric with all eigenvalues being nonzero, and set
\begin{equation}\label{eq:fH}
    f^H(x) = \frac{1}{2}x^{\top}Hx.
\end{equation}

\begin{lemma}\label{lem:pH}
    Let $H\in \bR^{d\times d}$ be symmetric and consider the quadratic function $f^H$ as in \eqref{eq:fH}. For any $\rho^H > 0$, define
    \begin{equation*}
        U_2^H = U_2^H(\rho^H) = \bigcup_{x\in (f^H)^{-1}(0)} B(x, \rho^H\norm{x}).
    \end{equation*}
    There exist constants $p^H_+>0$ and $p^H_- < 0$ so that the followings hold for any $x\in \bR^d\setminus U_2^H$:
    \begin{itemize}
        \item[(i)] If $f^H(x) \geq 0$, then $f^H(x)\geq p_+^H \norm{x}^2$.
        \item[(ii)] If $f^H(x) \leq 0$, then $f^H(x)\leq p_-^H \norm{x}^2$.
    \end{itemize}
\end{lemma}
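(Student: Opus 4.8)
The plan is to exploit the fact that $f^H$ is homogeneous of degree $2$, which turns every set in sight into a cone and reduces the statement to a compactness argument on the unit sphere. First I would record that $f^H(\lambda x) = \lambda^2 f^H(x)$ for $\lambda > 0$, so that $(f^H)^{-1}(0)$ is a cone and, since $\lambda B(x,\rho^H\norm{x}) = B(\lambda x, \rho^H\norm{\lambda x})$, the set $U_2^H$ is an \emph{open} cone: scaling its defining union by any $\lambda>0$ reproduces it. Consequently
\[
    K := \{x\in\bR^d : \norm{x} = 1\}\setminus U_2^H
\]
is a closed subset of the unit sphere, hence compact.

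The crux is the claim that $f^H$ never vanishes on $K$. Indeed, if $x\in K$ satisfied $f^H(x)=0$, then $x\in (f^H)^{-1}(0)$ and, because $\norm{x}=1$ makes $\rho^H\norm{x}=\rho^H>0$, we would get $x\in B(x,\rho^H\norm{x})\subseteq U_2^H$, contradicting $x\in K$. Granting this, $K$ is the disjoint union of the compact sets $K_+ := K\cap\{x : f^H(x)\ge 0\} = K\cap\{x : f^H(x)>0\}$ and $K_- := K\cap\{x : f^H(x)\le 0\} = K\cap\{x : f^H(x)<0\}$, and I would set
\[
    p_+^H := \min_{x\in K_+} f^H(x) > 0,\qquad p_-^H := \max_{x\in K_-} f^H(x) < 0,
\]
the extrema being attained by continuity on a compact set and being strictly positive/negative by the no-vanishing claim; if $K_+$ or $K_-$ is empty the corresponding assertion of the lemma is vacuous and I would simply pick any admissible value.

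It then remains to propagate these sphere bounds to all of $\bR^d\setminus U_2^H$ using homogeneity. The case $x=0$ is immediate since $f^H(0)=0$. For $x\ne 0$ with $x\notin U_2^H$, conicity of $U_2^H$ forces $x/\norm{x}\in K$; if $f^H(x)\ge 0$ then $f^H(x/\norm{x}) = f^H(x)/\norm{x}^2\ge 0$, so $x/\norm{x}\in K_+$ and hence $f^H(x) = \norm{x}^2 f^H(x/\norm{x})\ge p_+^H\norm{x}^2$, and symmetrically when $f^H(x)\le 0$. I do not expect a genuine obstacle here: the only points needing care are checking that $U_2^H$ is really a cone (so that normalizing a point keeps it outside $U_2^H$) and dispatching the degenerate cases $x=0$ and $K_\pm=\emptyset$, all of which are routine.
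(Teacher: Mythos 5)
Your proof is correct and follows essentially the same route as the paper: both exploit the degree-$2$ homogeneity of $f^H$ (so that $U_2^H$ and the relevant sign sets are cones) to reduce to a compact subset of a sphere on which $f^H/\norm{\cdot}^2$ cannot vanish, then take the extremum there. The only cosmetic difference is that the paper proves (i) and obtains (ii) by applying (i) to $-H$, while you treat $K_+$ and $K_-$ symmetrically; your handling of the degenerate cases ($x=0$, empty $K_\pm$) is fine.
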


\begin{proof}
    We only prove (i) since (ii) is a direct corollary of (i) for $\frac{1}{2}x^\top(-H) x$, and we assume that $(f^H)^{-1}([0,+\infty)) \setminus U_2^H$ is not empty since otherwise the result is trivial. It can be seen that $(f^H)^{-1}(0)$, $\bR^d\setminus U_2^H$, and $(f^H)^{-1}([0,+\infty))$ are all closed under scalar multiplication, which implies for any $c,c'>0$,
    \begin{equation*}
        \frac{c}{c'} \cdot \left(\left( \partial B(0, c')\cap (f^H)^{-1}([0,+\infty)) \right) \setminus U_2^H \right) =  \left( \partial B(0, c)\cap (f^H)^{-1}([0,+\infty)) \right) \setminus U_2^H,
    \end{equation*}
    where $\partial B(0, c')$ and $\partial B(0, c)$ are the boundaries of $B(0,c')$ and $B(0, c)$, respectively. This homogeneity property leads to that
    \begin{equation}\label{eq:homogeneous}
        \inf_{x \in \left( \partial B(0,c')\cap (f^H)^{-1}([0,+\infty)) \right) \setminus U_2^H} \frac{f^{H}(x)}{\norm{x}^2} = \inf_{x \in \left( \partial B(0,c)\cap (f^H)^{-1}([0,+\infty)) \right) \setminus U_2^H} \frac{f^{H}(x)}{\norm{x}^2}.
    \end{equation}
    Notice that $\partial B(0,c)$ and $(f^H)^{-1}([0,+\infty))$ are both closed and that $U_2^H$ is open. Therefore, $\left( \partial B(0,c)\cap (f^H)^{-1}([0,+\infty)) \right) \setminus U_2^H$ is closed, on which $f^H(x) / \norm{x}^2$ is always positive. One can thus conclude that
    \begin{equation}\label{eq:p>0}
        p_+^H:= \inf_{x \in \left( \partial B(0,c)\cap (f^H)^{-1}([0,+\infty)) \right) \setminus U_2^H} \frac{f^H(x)}{\norm{x}^2} = \min_{x \in \left( \partial B(0,c)\cap (f^H)^{-1}([0,+\infty)) \right) \setminus U_2^H} \frac{f^H(x)}{\norm{x}^2} > 0.
    \end{equation}
    Combining \eqref{eq:homogeneous} and \eqref{eq:p>0}, we know that
    \begin{equation*}
        f^H(x) \geq p_+^H \norm{x}^2,\quad \forall~x\in (f^H)^{-1}([0,+\infty)) \setminus U_2^H,
    \end{equation*}
    which proves (i).
\end{proof}

\begin{lemma}\label{lem:U2H}
    Let $x^* = 0\in\crits$ be a strict saddle point of $f$ with
    non-degenerate Hessian $H = \nabla^2 f(x^*)$ and let $f(x^*) = 0$. If
    $H$ has at least one positive eigenvalue and $\rho^H < \rho / 4 <
    1/4$, then there exists a neighborhood $U'$ of $0$, such that
    $U_2^H(\rho^H) \cap U'\subseteq U_2(\rho) \cap U' $.
\end{lemma}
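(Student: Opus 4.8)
The plan is to compare the zero set $f^{-1}(0)$ near $x^*=0$ with the zero cone $(f^H)^{-1}(0)$ of the quadratic part $f^H$. Concretely, I will show that in a sufficiently small ball $U'=B(0,r_0)$ every point $x\in(f^H)^{-1}(0)\setminus\{0\}$ lies within distance $O(\norm{x}^2)$ of $f^{-1}(0)$. Since $U_2^H(\rho^H)$ and $U_2(\rho)$ are unions of balls whose radii are proportional to the norm of their centers, this $O(\norm{x}^2)=o(\norm{x})$ discrepancy is absorbed by the gap between $\rho^H<\rho/4$ and $\rho$ once $r_0$ is small, which yields the inclusion.

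The core of the argument is the following proximity estimate: there are $r_0>0$ and a constant $C$ (depending only on the local Lipschitz data of $\nabla^2 f$ near $0$, on $M$, and on $\sigma_{\min}(H)$, the smallest singular value of $H$, which is positive because $H$ is non-degenerate) such that for every $x$ with $f^H(x)=0$ and $0<\norm{x}<r_0$ there exists $y$ with $f(y)=0$ and $\norm{y-x}\le C\norm{x}^2$. To prove this, set $w=Hx/\norm{Hx}$ (well defined since $H$ is invertible and $x\ne0$) and $g(s)=f(x+sw)$. Taylor's formula at $0$, the local Lipschitz bound $\norm{\nabla^2 f(u)-H}\le L\norm{u}$ near $0$ from Assumption~\ref{Assump:Hess_bdd}, and $f^H(x)=0$ give $|g(0)|=|f(x)-f^H(x)|\le\frac{L}{6}\norm{x}^3$ and $\norm{\nabla f(x)-Hx}\le\frac{L}{2}\norm{x}^2$; hence $g'(0)=w^\top\nabla f(x)\ge\norm{Hx}-\frac{L}{2}\norm{x}^2\ge\sigma_{\min}(H)\norm{x}-\frac{L}{2}\norm{x}^2$, and since $\norm{\nabla^2 f}\le M$ controls $g'(s)-g'(0)$, one gets $g'(s)\ge\frac14\sigma_{\min}(H)\norm{x}$ for all $|s|\le\eta\norm{x}$ with $\eta=\sigma_{\min}(H)/(4M)$, provided $\norm{x}<r_0$ is small enough. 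Thus $g$ is strictly increasing on $[-\eta\norm{x},\eta\norm{x}]$, and because $|g(0)|\le\frac{L}{6}\norm{x}^3$ is dominated by $\frac14\sigma_{\min}(H)\eta\norm{x}^2$ for $\norm{x}$ small, $g$ changes sign on this interval; the intermediate value theorem together with the lower bound on $g'$ produces $s_0$ with $g(s_0)=0$ and $|s_0|\le|g(0)|/\big(\tfrac14\sigma_{\min}(H)\norm{x}\big)\le C\norm{x}^2$, so $y=x+s_0w$ works.

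Granting the estimate, shrink $U'$ to $B(0,r_0')$ with $r_0'\le\frac34 r_0$. Let $z\in U_2^H(\rho^H)\cap U'$. Since $\rho^H<1$ one checks that $0\notin U_2^H(\rho^H)$, so $z\ne0$ and there is $x\ne0$ with $f^H(x)=0$ and $\norm{z-x}<\rho^H\norm{x}$. Then $\norm{x}\le\norm{z}/(1-\rho^H)\le\frac43\norm{z}<\frac43 r_0'\le r_0$, so the estimate supplies $y$ with $f(y)=0$ and $\norm{y-x}\le C\norm{x}^2$; consequently $\norm{z-y}\le\rho^H\norm{x}+C\norm{x}^2$ and $\norm{y}\ge\norm{x}(1-C\norm{x})$. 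Hence $\norm{z-y}<\rho\norm{y}$ holds once $\rho^H+C\norm{x}<\rho(1-C\norm{x})$, i.e. $\rho^H<\rho-(1+\rho)C\norm{x}$; since $\rho^H<\rho/4$ and $\rho<1$, this is guaranteed as soon as $\norm{x}<3\rho/(8C)$, which we secure by further requiring $\frac43 r_0'<3\rho/(8C)$. Then $z\in B(y,\rho\norm{y})\subseteq U_2(\rho)$, and $z\in U'$ by assumption, so $U_2^H(\rho^H)\cap U'\subseteq U_2(\rho)\cap U'$.

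The main difficulty lies entirely in the proximity estimate — more precisely, in selecting a probing direction along which $f$ varies at a guaranteed rate $\gtrsim\norm{x}$ near $x$: the direction $Hx/\norm{Hx}$ (equivalently $\nabla f(x)/\norm{\nabla f(x)}$) works precisely because non-degeneracy of $H$ forces $\norm{\nabla f(x)}\gtrsim\norm{x}$, and one must balance the cubic defect $|f(x)|=O(\norm{x}^3)$ against this linear rate so that the correction step $|s_0|$ is only $O(\norm{x}^2)$, hence negligible compared with the radius $\rho^H\norm{x}$. The hypothesis that $H$ has a positive eigenvalue is what makes $(f^H)^{-1}(0)$, and so the statement, non-vacuous; the rest is routine Taylor estimation and tracking how small $U'$ must be.
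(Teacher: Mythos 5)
Your proof is correct, but it takes a genuinely different route from the paper's. The paper also reduces the matter to producing a zero of $f$ near each zero $x'$ of $f^H$, but it does so by diagonalizing $H$, perturbing $x'$ to $x'+\rho^H P_{\pm}x'$ along the positive/negative eigenspaces, showing $f^H(x'+\rho^H P_{+}x')\geq c\norm{x'}^2$ and $f^H(x'+\rho^H P_{-}x')\leq -c\norm{x'}^2$, and then applying the intermediate value theorem inside $B(x',\rho^H\norm{x'})$; the zero $x''$ obtained this way may be as far as $\rho^H\norm{x'}$ from $x'$, and the hypothesis $\rho^H<\rho/4$ is exactly what absorbs the combined displacement $\norm{x-x'}+\norm{x'-x''}<2\rho^H\norm{x'}<4\rho^H\norm{x''}<\rho\norm{x''}$. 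You instead probe along $Hx/\norm{Hx}$ and run a quantitative IVT with the derivative lower bound $w^\top\nabla f(x)\geq\sigma_{\min}(H)\norm{x}-\tfrac{L}{2}\norm{x}^2$ coming from non-degeneracy, using the local Lipschitz continuity of $\nabla^2 f$ from Assumption~\ref{Assump:Hess_bdd} to make the defect $|f(x)-f^H(x)|$ cubic, so the zero of $f$ you find is within $O(\norm{x}^2)$ of $x$. This buys a sharper statement: the correction is $o(\norm{x})$, so any $\rho^H<\rho$ would suffice (the factor $1/4$ is not needed), and your argument uses the positive-eigenvalue hypothesis only to keep $U_2^H$ from being empty, whereas the paper's construction genuinely needs eigenvalues of both signs. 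What it costs is the Lipschitz-Hessian part of Assumption~\ref{Assump:Hess_bdd}, which the paper's own proof of Lemma~\ref{lem:U2H} avoids (it only needs the $C^2$ estimate $|f-f^H|=o(\norm{x}^2)$); note your scheme would also go through with mere continuity of $\nabla^2 f$ if you replaced the cubic and quadratic bounds by little-o bounds. The bookkeeping in your containment step ($\norm{x}\leq\norm{z}/(1-\rho^H)<\tfrac{4}{3}\norm{z}$, then $\norm{z-y}<\rho^H\norm{x}+C\norm{x}^2<\rho\norm{y}$ once $\norm{x}<3\rho/(8C)$, and $0\notin U_2^H(\rho^H)$) is sound.
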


\begin{proof}
    Without loss of generality, we assume that $H = \text{diag}(h_1,\dots,h_{d'}, h_{d'+1},\dots,h_d)$, where $h_1\geq\cdots\geq h_{d'}> 0 > h_{d'+1}\geq \dots \geq h_d$, since otherwise one can change the coordinates via an orthogonal transformation.
    Define
    \begin{equation}\label{eq:constant_U2H}
        c = \frac{1}{2} \min\{ h_{d'},  -h_{d'+1} \} \cdot \left(2\rho^H + (\rho^H)^2\right) > 0,
    \end{equation}
    and let $c'>0$ depend on $c$ and $f$ near $0$ so that 
    \begin{equation}\label{eq:taylor_U2H}
        \left| f(x) - f^H(x) \right| < \frac{c}{4} \norm{x}^2,\quad \forall~x\in B(0, 3c').
    \end{equation}

    Set $U' = B(0, c')$.
    Consider any $x\in U_2^H(\rho^H) \cap U'$, and it suffices to show that $x\in U_2(\rho)$. There exists $x'\in (f^H)^{-1}(0) \setminus \{0\}$ so that $x\in B(x',\rho^H \norm{x'})$ by the definition of $U_2^H(\rho^H)$. One also has $x' \in B(0, 2c')$ since $\rho^H < 1/2$ and $x\in B(0, c')$. Define
    \begin{equation*}
        x_+' = x' + \rho^H P_+ x',\quad x_-' = x' + \rho^H P_- x',
    \end{equation*}
    where $P_+ = \text{diag}(1,\dots, 1, 0, \dots, 0)\in\bR^{d\times d}$ has $d'$ nonzero diagonal entries and $P_- = I - P_+$. Note that 
    \begin{equation*}
        0 = f^H(x') = f^H(P_+ x') + f^H(P_- x'),
    \end{equation*}
    which implies that
    \begin{align*}
        f^H(P_+ x') = - f^H(P_- x') & = \frac{1}{2} f^H(P_+ x') - \frac{1}{2} f^H(P_- x') \geq \frac{h_{d'}}{2} \norm{P_+ x'}^2 - \frac{h_{d'+1}}{2} \norm{P_- x'}^2 \\
        & \geq \frac{1}{2} \min\{ h_{d'},  -h_{d'+1} \} \norm{x'}^2.
    \end{align*}
    One can hence obtain that 
    \begin{equation*}
        f^H(x_+') = (1+\rho^H)^2 f^H(P_+ x') + f^H(P_- x')  = \left(2\rho^H + (\rho^H)^2\right) f^H(P_+ x') \geq c \norm{x'}^2,
    \end{equation*}
    with $c$ being the constant in \eqref{eq:constant_U2H},
    and similarly that
    \begin{equation*}
        f^H(x_-') = f^H(P_+ x') + (1+\rho^H)^2 f^H(P_- x')  = \left(2\rho^H + (\rho^H)^2\right) f^H(P_- x') \leq - c \norm{x'}^2.
    \end{equation*}
    Notice also $x_+', x_-'\in B(x',\rho^H \norm{x'})\subseteq B(0, 3c')$. It holds that
    \begin{equation*}
        f^H(x_+') \geq \frac{c}{4} \norm{x_+'}^2,\quad f^H(x_-') \leq - \frac{c}{4} \norm{x_-'}^2.
    \end{equation*}
    which combined with \eqref{eq:taylor_U2H} yields that
    \begin{equation*}
        f(x_+') > 0 > f(x_-').
    \end{equation*}
    Therefore, there exists $x''\in B(x',\rho^H \norm{x'})$ so that 
    \begin{equation*}
        f(x'') = 0.
    \end{equation*}
    We also have that
    \begin{equation*}
        \norm{x - x''} \leq \norm{x - x'} + \norm{x' - x''} < 2\rho^H \norm{x'} < 4\rho^H \norm{x''} < \rho \norm{x''},
    \end{equation*}
    which leads to that
    \begin{equation*}
        x \in B(x'', \rho\norm{x''})\subseteq U_2(\rho).
    \end{equation*}
    The proof is thus completed.
\end{proof}

\begin{proof}[Proof of Lemma~\ref{lem:convexesti}]
    Let $\rho^H\in(0,\rho / 4)$ and denote $U_2^H = U_2^H(\rho^H)$. By Lemma~\ref{lem:U2H}, there exists a neighborhood $U'$ of $0$, such that $U_2^H\cap U'\subseteq U_2\cap U'$. Then by Lemma~\ref{lem:pH}, there exist constants $p^H_+>0$ and $p^H_- < 0$ so that for $x\in U'\setminus U_2 \subseteq U'\setminus U_2^H$, one has either $f^H(x)\geq p_+^H \norm{x}^2$ or $f^H(x)\leq p_-^H \norm{x}^2$.
    Let $U \subseteq U_1\cap U'$ be a neighborhood of $0$ so that
    \begin{equation*}
        \left|f(x) - f^H(x)\right| < p \norm{x}^2,\quad\forall~x\in U,
    \end{equation*}
    where $p = \frac{1}{2}\min\{p_+^H, -p_-^H\}$. Therefore, we have for any $x\in U\setminus U_2$ that, either $f(x)\geq p\norm{x}^2$ or $f(x)\leq -p\norm{x}^2$.

    Consider any $x\in \left( U\cap f^{-1}([0,+\infty)) \right) \setminus S$. Note that $S = U_1 \cap U_2 \cap f^{-1}([0,+\infty))$ and that $U\subseteq U_1$. We have $x\in U\setminus U_2$ and $f(x)\geq 0$, which excludes $f(x)\leq -p\norm{x}^2$ and leads to $f(x)\geq p\norm{x}^2$.
\end{proof}

\bibliographystyle{amsxport}
\bibliography{references}

@article{chen2023global,
  title={On the global convergence of randomized coordinate gradient descent for nonconvex optimization},
  author={Chen, Ziang and Li, Yingzhou and Lu, Jianfeng},
  journal={SIAM Journal on Optimization},
  volume={33},
  number={2},
  pages={713--738},
  year={2023},
  publisher={SIAM}
}

@article{li2005sternberg,
author = {Li, Weigu and Lu, Kening},
title = {Sternberg theorems for random dynamical systems},
journal = {Communications on Pure and Applied Mathematics},
volume = {58},
number = {7},
pages = {941-988},
doi = {https://doi.org/10.1002/cpa.20083},
url = {https://onlinelibrary.wiley.com/doi/abs/10.1002/cpa.20083},
eprint = {https://onlinelibrary.wiley.com/doi/pdf/10.1002/cpa.20083},
abstract = {Abstract In this paper, we prove the smooth conjugacy theorems of Sternberg type for random dynamical systems based on their Lyapunov exponents. We also present a stable and unstable manifold theorem with tempered estimates that are used to construct conjugacy. © 2005 Wiley Periodicals, Inc.},
year = {2005}
}

@article{guo2016smooth,
author = {Peng Guo and Jun Shen},
title = {{Smooth center manifolds for random dynamical systems}},
volume = {46},
journal = {Rocky Mountain Journal of Mathematics},
number = {6},
publisher = {Rocky Mountain Mathematics Consortium},
pages = {1925 -- 1962},
keywords = {center manifolds, center-stable manifolds, center-unstable manifolds, Random dynamical system, smoothness},
year = {2016},
doi = {10.1216/RMJ-2016-46-6-1925},
URL = {https://doi.org/10.1216/RMJ-2016-46-6-1925}
}

@book{arnold1998random,
series = {Springer monographs in mathematics},
publisher = {Springer},
isbn = {3540637583},
year = {1998},
title = {Random dynamical systems},
address = {New York},
author = {Arnold, Ludwig},
keywords = {Stochastic differential equations},
lccn = {^^^98027207^},
}

@article{wright2015coordinate,
  title={Coordinate descent algorithms},
  author={Wright, Stephen J},
  journal={Mathematical programming},
  volume={151},
  number={1},
  pages={3--34},
  year={2015},
  publisher={Springer}
}

@article{shi2016primer,
  title={A primer on coordinate descent algorithms},
  author={Shi, Hao-Jun Michael and Tu, Shenyinying and Xu, Yangyang and Yin, Wotao},
  journal={arXiv preprint arXiv:1610.00040},
  year={2016}
}

@article{saha2013nonasymptotic,
  title={On the nonasymptotic convergence of cyclic coordinate descent methods},
  author={Saha, Ankan and Tewari, Ambuj},
  journal={SIAM Journal on Optimization},
  volume={23},
  number={1},
  pages={576--601},
  year={2013},
  publisher={SIAM}
}

@article{beck2013convergence,
  title={On the convergence of block coordinate descent type methods},
  author={Beck, Amir and Tetruashvili, Luba},
  journal={SIAM journal on Optimization},
  volume={23},
  number={4},
  pages={2037--2060},
  year={2013},
  publisher={SIAM}
}

@article{nesterov2012efficiency,
  title={Efficiency of coordinate descent methods on huge-scale optimization problems},
  author={Nesterov, Yu},
  journal={SIAM Journal on Optimization},
  volume={22},
  number={2},
  pages={341--362},
  year={2012},
  publisher={SIAM}
}

@inproceedings{nutini2015coordinate,
  title={Coordinate descent converges faster with the gauss-southwell rule than random selection},
  author={Nutini, Julie and Schmidt, Mark and Laradji, Issam and Friedlander, Michael and Koepke, Hoyt},
  booktitle={International Conference on Machine Learning},
  pages={1632--1641},
  year={2015},
  organization={PMLR}
}

@article{wright2020analyzing,
  title={Analyzing random permutations for cyclic coordinate descent},
  author={Wright, Stephen and Lee, Ching-pei},
  journal={Mathematics of computation},
  volume={89},
  number={325},
  pages={2217--2248},
  year={2020}
}

@article{lee2019random,
  title={Random permutations fix a worst case for cyclic coordinate descent},
  author={Lee, Ching-Pei and Wright, Stephen J},
  journal={IMA Journal of Numerical Analysis},
  volume={39},
  number={3},
  pages={1246--1275},
  year={2019},
  publisher={Oxford University Press}
}

@inproceedings{ge2015escaping,
  title={Escaping from saddle points—online stochastic gradient for tensor decomposition},
  author={Ge, Rong and Huang, Furong and Jin, Chi and Yuan, Yang},
  booktitle={Conference on learning theory},
  pages={797--842},
  year={2015},
  organization={PMLR}
}

@inproceedings{jin2017escape,
  title={How to escape saddle points efficiently},
  author={Jin, Chi and Ge, Rong and Netrapalli, Praneeth and Kakade, Sham M and Jordan, Michael I},
  booktitle={International conference on machine learning},
  pages={1724--1732},
  year={2017},
  organization={PMLR}
}

@inproceedings{jin2018accelerated,
  title={Accelerated gradient descent escapes saddle points faster than gradient descent},
  author={Jin, Chi and Netrapalli, Praneeth and Jordan, Michael I},
  booktitle={Conference On Learning Theory},
  pages={1042--1085},
  year={2018},
  organization={PMLR}
}

@inproceedings{lee2016gradient,
  title={Gradient descent only converges to minimizers},
  author={Lee, Jason D and Simchowitz, Max and Jordan, Michael I and Recht, Benjamin},
  booktitle={Conference on learning theory},
  pages={1246--1257},
  year={2016},
  organization={PMLR}
}

@article{lee2019first,
  title={First-order methods almost always avoid strict saddle points},
  author={Lee, Jason D and Panageas, Ioannis and Piliouras, Georgios and Simchowitz, Max and Jordan, Michael I and Recht, Benjamin},
  journal={Mathematical programming},
  volume={176},
  pages={311--337},
  year={2019},
  publisher={Springer}
}

@article{ding2024random,
  title = {Random coordinate descent: A simple alternative for optimizing parameterized quantum circuits},
  author = {Ding, Zhiyan and Ko, Taehee and Yao, Jiahao and Lin, Lin and Li, Xiantao},
  journal = {Phys. Rev. Res.},
  volume = {6},
  issue = {3},
  pages = {033029},
  numpages = {25},
  year = {2024},
  month = {Jul},
  publisher = {American Physical Society},
  doi = {10.1103/PhysRevResearch.6.033029},
  url = {https://link.aps.org/doi/10.1103/PhysRevResearch.6.033029}
}

@article{mazumder2011sparsenet,
  title={Sparsenet: Coordinate descent with nonconvex penalties},
  author={Mazumder, Rahul and Friedman, Jerome H and Hastie, Trevor},
  journal={Journal of the American Statistical Association},
  volume={106},
  number={495},
  pages={1125--1138},
  year={2011},
  publisher={Taylor \& Francis}
}

@article{csorgHo1968strong,
  title={On the strong law of large numbers and the central limit theorem for martingales},
  author={Cs{\"o}rg{\H{o}}, Mikl{\'o}s},
  journal={Transactions of the American Mathematical Society},
  volume={131},
  number={1},
  pages={259--275},
  year={1968}
}

@article{o2019behavior,
  title={Behavior of accelerated gradient methods near critical points of nonconvex functions},
  author={O’Neill, Michael and Wright, Stephen J},
  journal={Mathematical Programming},
  volume={176},
  pages={403--427},
  year={2019},
  publisher={Springer}
}

@book{shub2013global,
  title={Global stability of dynamical systems},
  author={Shub, Michael},
  year={2013},
  publisher={Springer Science \& Business Media}
}

@article{liu2024almost,
  title={Almost sure convergence rates analysis and saddle avoidance of stochastic gradient methods},
  author={Liu, Jun and Yuan, Ye},
  journal={Journal of Machine Learning Research},
  volume={25},
  number={271},
  pages={1--40},
  year={2024}
}

@article{ruelle1979ergodic,
  title={Ergodic theory of differentiable dynamical systems},
  author={Ruelle, David},
  journal={Publications Math{\'e}matiques de l'Institut des Hautes {\'E}tudes Scientifiques},
  volume={50},
  number={1},
  pages={27--58},
  year={1979},
  publisher={Springer}
}

@article{ruelle1982characteristic,
  title={Characteristic exponents and invariant manifolds in Hilbert space},
  author={Ruelle, David},
  journal={Annals of Mathematics},
  pages={243--290},
  year={1982},
  publisher={JSTOR}
}

@article{boxler1989stochastic,
  title={A stochastic version of center manifold theory},
  author={Boxler, Petra},
  journal={Probability Theory and Related Fields},
  volume={83},
  number={4},
  pages={509--545},
  year={1989},
  publisher={Springer}
}

@article{attouch2010proximal,
  title={Proximal alternating minimization and projection methods for nonconvex problems: An approach based on the Kurdyka-{\L}ojasiewicz inequality},
  author={Attouch, H{\'e}dy and Bolte, J{\'e}r{\^o}me and Redont, Patrick and Soubeyran, Antoine},
  journal={Mathematics of operations research},
  volume={35},
  number={2},
  pages={438--457},
  year={2010},
  publisher={INFORMS}
}

@inproceedings{cai2023cyclic,
  title={Cyclic block coordinate descent with variance reduction for composite nonconvex optimization},
  author={Cai, Xufeng and Song, Chaobing and Wright, Stephen and Diakonikolas, Jelena},
  booktitle={International Conference on Machine Learning},
  pages={3469--3494},
  year={2023},
  organization={PMLR}
}

@article{bai2025greedy,
title = {On greedy randomized coordinate updating iteration methods for solving symmetric eigenvalue problems},
journal = {Applied Numerical Mathematics},
volume = {216},
pages = {76-97},
year = {2025},
issn = {0168-9274},
doi = {https://doi.org/10.1016/j.apnum.2025.04.010},
url = {https://www.sciencedirect.com/science/article/pii/S0168927425000972},
author = {Zhong-Zhi Bai},
keywords = {Symmetric eigenvalue problem, Rayleigh quotient, Greedy probability criterion, Randomized iteration method, Convergence property},
abstract = {In order to compute the smallest eigenvalue and its corresponding eigenvector of a large-scale, real, and symmetric matrix, we propose a class of greedy randomized coordinate updating iteration methods based on the principle that the indices of larger entries in absolute value of the current residual are selected with a higher probability and, with respect to this index set, the next iterate is updated from the current iterate along with the selected coordinate such that the corresponding entry of the residual is annihilated, resulting in fast convergence rates of the proposed iteration methods. Under appropriate conditions, we prove the convergence of both sequences of the Rayleigh-quotients and the acute angles between the iterates and the eigenvector in terms of the expectation. By numerical experiments, we show that this class of greedy randomized coordinate updating iteration methods are advantageous over the parameterized power method and the coordinate descent method in both iteration counts and computing times. Moreover, with theoretical analysis and computational performance, we confirm that the convergence property of this class of iteration methods can be improved significantly by suitably choosing the arbitrary nonnegative parameter involved in the greedy probability criterion.}
}

@article{tran2025randomized,
  title={Randomized block-coordinate optimistic gradient algorithms for root-finding problems},
  author={Tran-Dinh, Quoc and Luo, Yang},
  journal={Mathematics of Operations Research},
  year={2025},
  publisher={INFORMS}
}

@article{peng2023block,
  title={Block coordinate descent on smooth manifolds: Convergence theory and twenty-one examples},
  author={Peng, Liangzu and Vidal, Ren{\'e}},
  journal={arXiv preprint arXiv:2305.14744},
  year={2023}
}

@book{guillemin2010differential,
  title={Differential topology},
  author={Guillemin, Victor and Pollack, Alan},
  volume={370},
  year={2010},
  publisher={American Mathematical Soc.}
}

@book{krantz2002implicit,
  title={The implicit function theorem: history, theory, and applications},
  author={Krantz, Steven George and Parks, Harold R},
  year={2002},
  publisher={Springer Science \& Business Media}
}

@article{boct2023inertial,
  title={Inertial proximal block coordinate method for a class of nonsmooth sum-of-ratios optimization problems},
  author={Bo{\c{t}}, Radu Ioan and Dao, Minh N and Li, Guoyin},
  journal={SIAM Journal on Optimization},
  volume={33},
  number={2},
  pages={361--393},
  year={2023},
  publisher={SIAM}
}

@article{liu2015asynchronous,
  title={Asynchronous stochastic coordinate descent: Parallelism and convergence properties},
  author={Liu, Ji and Wright, Stephen J},
  journal={SIAM Journal on Optimization},
  volume={25},
  number={1},
  pages={351--376},
  year={2015},
  publisher={SIAM}
}

@inproceedings{liu2014asynchronous,
  title={An asynchronous parallel stochastic coordinate descent algorithm},
  author={Liu, Ji and Wright, Steve and R{\'e}, Christopher and Bittorf, Victor and Sridhar, Srikrishna},
  booktitle={International Conference on Machine Learning},
  pages={469--477},
  year={2014},
  organization={PMLR}
}

@article{li2019coordinatewise,
  title={Coordinatewise descent methods for leading eigenvalue problem},
  author={Li, Yingzhou and Lu, Jianfeng and Wang, Zhe},
  journal={SIAM Journal on Scientific Computing},
  volume={41},
  number={4},
  pages={A2681--A2716},
  year={2019},
  publisher={SIAM}
}

@article{zhang2025parallel,
  title={Parallel Multicoordinate Descent Methods for Full Configuration Interaction},
  author={Zhang, Yuejia and Gao, Weiguo and Li, Yingzhou},
  journal={Journal of Chemical Theory and Computation},
  volume={21},
  number={5},
  pages={2325--2337},
  year={2025},
  publisher={ACS Publications}
}

@article{wang2019coordinate,
  title={Coordinate descent full configuration interaction},
  author={Wang, Zhe and Li, Yingzhou and Lu, Jianfeng},
  journal={Journal of chemical theory and computation},
  volume={15},
  number={6},
  pages={3558--3569},
  year={2019},
  publisher={ACS Publications}
}

@article{wang2023coordinate,
  title={Coordinate Descent Full Configuration Interaction for Excited States},
  author={Wang, Zhe and Zhang, Zhiyuan and Lu, Jianfeng and Li, Yingzhou},
  journal={Journal of Chemical Theory and Computation},
  volume={19},
  number={21},
  pages={7731--7739},
  year={2023},
  publisher={ACS Publications}
}

\end{document}